\definecolor{10}{RGB}{115,59,171}
\definecolor{8}{RGB}{212,122,240}
\definecolor{7}{RGB}{99,212,119}
\definecolor{6}{RGB}{183,240,164}
\definecolor{D}{RGB}{255,162,79}
\definecolor{E}{RGB}{255,84,0}
\definecolor{F}{RGB}{158,248,255}
\definecolor{G}{RGB}{128,135,255}
\definecolor{I}{RGB}{187,255,0}
\definecolor{A}{cmyk}{.9,.05,.4,0}
\definecolor{B}{RGB}{150,30,150}
\definecolor{C}{RGB}{186,155,189}
\definecolor{9}{RGB}{0,180,60}
\definecolor{0}{RGB}{30,123,191}
\definecolor{1}{RGB}{255,113,102}
\definecolor{2}{RGB}{41,199,92}
\definecolor{3}{RGB}{242,207,16}
\definecolor{5}{RGB}{255,15,154}
\definecolor{4}{rgb}{.8,0,.8}
\definecolor{Red}{rgb}{1,0.4,0.4}
\definecolor{Green}{rgb}{.1,.5,.1}
\definecolor{Blue}{rgb}{.1,.1,.5}
\definecolor{blue}{RGB}{0,0,255}
\definecolor{Yellow}{rgb}{.8,.4,0}
\definecolor{X}{rgb}{.8,.4,0}
\definecolor{H}{rgb}{0,0,1}
\definecolor{light}{rgb}{.67,.84,.90}
\definecolor{Cyan}{rgb}{0,1,1}
\definecolor{Purple}{rgb}{.5,0,.5}
\definecolor{Purple2}{rgb}{.5,.2,.5}
\definecolor{white}{rgb}{1.0,1.0,1.0}
\definecolor{Purple2}{rgb}{.8,.4,0}
\definecolor{Amarillo}{RGB}{225,191,73}
\definecolor{Celeste}{RGB}{117,170,219}
\definecolor{Castano}{RGB}{232,53,17}
\definecolor{Black}{RGB}{0,0,0}
\definecolor{White}{RGB}{255,255,255}
\definecolor{gris}{rgb}{.5,.5,.5}
\newtheorem{theorem}{Theorem}[section]
\newaliascnt{corollary}{theorem}
\newtheorem{corollary}[corollary]{Corollary}
\newaliascnt{lemma}{theorem}
\newtheorem{lemma}[lemma]{Lemma}
\newaliascnt{proposition}{theorem}
\newtheorem{proposition}[proposition]{Proposition}
\DeclareMathOperator{\ext}{ext}
\DeclareMathOperator{\dist}{dist}
\DeclareMathOperator{\diam}{diam}
\DeclareMathOperator{\act}{act}
\DeclareMathOperator{\dpe}{dpe}
\DeclareMathOperator{\ecc}{ecc}
\begin{document}
		\begin{frontmatter}
\title {
The 2-switch-degree of a graph
}
\address[IMASL]{Instituto de Matem\'atica Aplicada San Luis, Universidad Nacional de San Luis and CONICET, San Luis, Argentina.}
\address[DEPTO]{Departamento de Matem\'atica, Universidad Nacional de San Luis, San Luis, Argentina.}

\author[IMASL]{Victor N. Schvöllner}\ead{vnsi9m6@gmail.com}
\author[IMASL,DEPTO]{Adri\'an Pastine}\ead{agpastine@unsl.edu.ar}

		\begin{abstract}
			In this work, we study the 2-switch-degree of a graph $G$, that is, the degree of $G$ as a vertex of the realization graph $\mathcal{G}(d)$ associated with the degree sequence $d$ of $G$. We characterize the active and inactive vertices of a graph, with special attention to the case of split graphs, which play a central role in this setting by Tyshkevich decomposition. We establish the basic properties of the degree, showing in particular that it is additive with respect to the Tyshkevich composition. We then give an explicit formula for the degree in terms of $2K_2$-subgraphs, $C_4$-subgraphs and triangles, which yields an $O(n^3)$ algorithm for its computation and reveals an unexpected connection with the first and second Zagreb indices from Chemical Graph Theory. Finally, we obtain explicit formulas for the degree of trees and unicyclic graphs, and we show that the subgraph of $\mathcal{G}(d)$, induced by the trees with degree sequence $d$, is regular.
		\end{abstract}
	\begin{keyword}
		2-switch \sep degree sequence \sep realization graph \sep 2-switch-degree \sep Zagreb index
		\MSC[2020] 05C07 \sep 05C75 \sep 05C05 \sep 05C09 \sep 05C38
	\end{keyword}		
\end{frontmatter}	

\section{Introduction}\label{sec:introduction}

We begin with a brief intuitive description of the key concepts studied in this paper, so that we can outline our main results. For formal definitions see \Cref{sec:prelim}. A $2$-switch on a graph $G$ is the exchange of two disjoint edges $ab$, $cd$ with non-edges $ac$, $bd$. 
We say that a vertex $a$ is \emph{active} in a graph $G$ if it participates in some 2-switch on $G$. The degree sequence of a graph $G$ with $V(G) = [n]=\{1,2,\ldots,n\}$ is $d=d(G)=(d_1,\ldots,d_n)$ where $d_i$ is the degree of vertex $i$ in $G$.

In \Cref{sec:vertices.activos} we study the set of active vertices of a graph (i.e., the set of vertices that participate in a $2$-switch on the graph). In \Cref{2switch.preservs.act} we show that the set of active vertices of a graph depends only on its degree sequence. That is, if $G$ and $H$ are graphs with vertex set $[n]$ with the same degree sequence, then they have the same active set. Using this result, we establish in \Cref{regular.graphs} that every connected regular non-complete graph is active (i.e., all its vertices are active). Next, we explore the relationship between the active set of a graph and its diameter. In this direction, we show that if $G$ is a non-active graph without isolated vertices, then all the graphs with the same degree sequence as $G$ have to be connected (\Cref{inactive.implies.connected}). As a consequence, if $G$ has no isolated vertices and diameter $\geq 4$, then $G$ is active (\Cref{inactive.implies.diam<=3.contrarr}). 

In \Cref{sec:act_vert_split}, we first discuss various properties of active vertices in split graphs, highlighting the characterization of their inactive vertices (\Cref{split.caract.vert.activos}). Next, for a large class of balanced and active split graphs, we provide a simple criterion to determine if they are indecomposable under Tyshkevich composition (\Cref{indecomposable_test_split} and \Cref{prime_test_split}).

Let $d$ be a graphical sequence. The realization graph of $d$ is the graph ${\cal G}(d)$ whose vertices are the graphs with degree sequence $d$, where two graphs $H_1,H_2$ are adjacent if $H_1$ is obtained from $H_2$ by a $2$-switch. In \Cref{sec:espacio.activo} we show that if $G$ is a graph with degree sequence $d$, $G^*$ is the subgraph of $G$ induced by its active vertices, and $d^*$ is the degree sequence of $G^*$, then ${\cal G}(d)$ is isomorphic to ${\cal G}(d^*)$ (\Cref{G(d)_iso_G(d*)}).

The degree of a graph $G$ is its degree as a vertex of ${\cal G}(d(G))$. This notion captures the flexibility of a graph under local transformations and is closely tied to other combinatorial and structural properties. In \Cref{sec:prop.basicas.deg}, after having established some basic facts about the degree (such as its relationship with induced subgraphs, and a formula for the degree of a graph in terms of the degrees and sizes of its connected components), we show in \Cref{deg(SoG)=deg(S)+deg(G)} that this parameter is additive with respect to Tyshkevich composition (the degree of the Tyshkevich composition is the sum of the degrees). Finally, using this result, we prove that a Tyshkevich composition is active if and only if each factor is (\Cref{SoG.act.iff.S_G.act}).

In \Cref{sec:formulas.explicitas} we give an explicit formula (\Cref{deg.general.formula}) for the graph degree in terms of the numbers of pairs of disjoint edges, $C_4$-subgraphs and triangles, from which it follows that the degree can be computed in $O(n^3)$ time, where $n$ is the order of the graph (indeed, within the cost of a single $n\times n$ matrix multiplication). Along the way, we establish connections with the first and second Zagreb indices from Chemical Graph Theory (\Cref{relacion.deg.zeta2}).

Finally, in \Cref{sec:deg.arboles.unicicl} we focus on the subgraph ${\cal T}(d)$ of ${\cal G}(d)$, induced by all trees with degree sequence $d$. We give a nice explicit formula for the degree of a tree in ${\cal T}(d)$ and ${\cal G}(d)$ (\Cref{deg_f.tree} and \Cref{deg.tree.G(d)}). Remarkably, ${\cal T}(d)$ turns out to be a regular graph (\Cref{T_d_regular}). Similarly, we study the subgraph ${\cal U}(d)$ of ${\cal G}(d)$, induced by all unicyclic graphs with degree sequence $d$. Again, we give explicit formulas for the degree of a unicyclic graph in ${\cal U}(d)$ and ${\cal G}(d)$ (\Cref{deg_u.unicyclic} and \Cref{deg_u.explicito}).


\section{Preliminaries}\label{sec:prelim}	

In this section we present our notation and some historical context. For specific graph theoretical concepts not defined here, we direct the reader to \cite{chartrand2010graphs}.
Let $G$ be a graph. We use the notation $V(G)$ and $E(G)$ to refer to the vertex set and the edge set of $G$, respectively. Thus, $|V(G)|$ and $|E(G)|$ denote, respectively, the order and the size of $G$. The complement of $G$ is denoted by $\overline{G}$. The expression $H\subseteq G$ means that $H$ is a subgraph of $G$. On the other hand $H\preceq G$ means that $H$ is an induced subgraph of $G$. As usual, if $W\subseteq V(G)$, we denote by $G[W]$ the subgraph of $G$ induced by $W$. Let $v\in V(G)$. The set $N_{G}(v)=\{x\in V(G): vx\in E(G)\}$ is the (open) \emph{neighborhood} of $v$ in $G$. The \emph{degree} of a vertex $v$ in $G$ (i.e., $|N_G(v)|$) is denoted by $\deg_G(v)$. If $G$ is clear from the context, we use $N_v$ instead of $N_G(v)$ and $d_v$ instead of $\deg_G(v)$. As mentioned in \Cref{sec:introduction}, we usually write $V(G) = [n]$ for some $n$. Then, the degrees of the vertices in $G$ can be arranged as an $n$-tuple $(d_v)_{v=1}^n = (d_1, \dots, d_n)$, forming what is called the \emph{degree sequence} of $G$, denoted by $d(G)$ or simply by $d$, if $G$ is clear from the context. When $d_1\geq d_2\geq\ldots\geq d_n$, we write $d$ in the more compact form $d_1^{\alpha_1}d_2^{\alpha_2}\ldots d_n^{\alpha_n}$, where $\alpha_i$ is the multiplicity of $d_i$ as an element of the multiset $\{d_i:i\in [n]\}$.
The notation $G\approx H$ means that the graphs $G$ and $H$ are isomorphic.\\


Let $G$ be a graph containing four distinct vertices $a,b,c,d$ such that $ab,cd \in G$ and $ac,bd \notin G$. A \emph{2-switch} on $G$ is a process that removes the edges $ab$ and $cd$ from $G$ and adds the edges $ac$ and $bd$ (see \cite{chartrand2010graphs}). We denote it by the $2\times 2$ matrix ${{a \ b}\choose{c \ d}}$, where the rows represent the edges to be removed and the columns represent the edges to be added. If $\tau= {{a \ b}\choose{c \ d}}$ is a 2-switch on $G$, then $\tau(G)$ denotes the transformed graph $(G-\{ab,cd\})+ \{ac,bd\}$. If $\theta=(\tau_i)_{i=1}^k$ is a sequence of 2-switches applied to $G$, we denote the transformed graph by $\theta(G)$ or $\tau_k\ldots\tau_1(G)$. By the symbol $\tau^{-1}$ we refer to the 2-switch ${a \ c}\choose{b \ d}$ on $\tau(G)$, which ``undo'' $\tau$. So, $\tau^{-1}$ satisfies that $\tau^{-1}\tau(G)=G$. Note that ${a \ c}\choose{b \ d}$, as a matrix, is the transpose of  ${{a \ b}\choose{c \ d}}$. A key observation is that, if ${{a \ b}\choose{c \ d}}$ is a 2-switch on $G$, then $G[a,b,c,d]$ is isomorphic to $P_4, C_4$ or $2K_2$. This can be quickly checked by analyzing the 11 possible isomorphism classes for a graph of order 4 (see \Cref{los.11.de.orden4}).
\begin{figure}[H]
	\[
	\begin{tikzpicture}
		[scale=.7,auto=left,every node/.style={scale=.65,circle,thick,draw}] 
		\node (n1) at (0,0) {};
		\node (n2) at (1,0)  {};
		\node [label=$\overline{K_4}$](n3) at (0,1)  {};
		\node (n4) at (1,1) {};
		\foreach \from/\to in {}
		\draw (\from) -- (\to);
		
		\node (n5) at (3,0) {};
		\node (n6) at (4,0)  {};
		\node [label=$\overline{D_4}$](n7) at (3,1)  {};
		\node (n8) at (4,1) {};
		\foreach \from/\to in {n5/n6}
		\draw (\from) -- (\to);
		
		\node (n9) at (6,0) {};
		\node (n10) at (7,0)  {};
		\node [label=$2K_2$](n11) at (6,1)  {};
		\node (n12) at (7,1) {};
		\foreach \from/\to in {n9/n10,n11/n12}
		\draw (\from) -- (\to);
		
		\node (n13) at (9,0) {};
		\node (n14) at (10,0)  {};
		\node [label=$\overline{U_4}$](n15) at (9,1)  {};
		\node (n16) at (10,1) {};
		\foreach \from/\to in {n13/n14,n14/n16}
		\draw (\from) -- (\to);
		
		\node (n17) at (12,0) {};
		\node (n18) at (13,0)  {};
		\node [label=$P_4$](n19) at (12,1)  {};
		\node (n20) at (13,1) {};
		\foreach \from/\to in {n17/n18,n20/n19,n18/n20}
		\draw (\from) -- (\to);
		
		\node (n21) at (15,0) {};
		\node (n22) at (16,0)  {};
		\node [label=$S_4$](n23) at (15,1)  {};
		\node (n24) at (16,1) {};
		\foreach \from/\to in {n21/n22,n22/n24,n22/n23}
		\draw (\from) -- (\to);
		
	\end{tikzpicture}
	\]
	\[
	\begin{tikzpicture}
		[scale=.7,auto=left,every node/.style={scale=.65,circle,thick,draw}] 
		\node (n1) at (0,0) {};
		\node (n2) at (1,0)  {};
		\node [label=$\overline{S_4}$](n3) at (0,1)  {};
		\node (n4) at (1,1) {};
		\foreach \from/\to in {n1/n2,n2/n4,n4/n1}
		\draw (\from) -- (\to);
		
		\node (n5) at (3,0) {};
		\node (n6) at (4,0)  {};
		\node [label=$U_4$](n7) at (3,1)  {};
		\node (n8) at (4,1) {};
		\foreach \from/\to in {n5/n6,n6/n8,n8/n5,n5/n7}
		\draw (\from) -- (\to);
		
		\node (n9) at (6,0) {};
		\node (n10) at (7,0)  {};
		\node [label=$C_4$](n11) at (6,1)  {};
		\node (n12) at (7,1) {};
		\foreach \from/\to in {n9/n10,n11/n12,n10/n12,n9/n11}
		\draw (\from) -- (\to);
		
		\node (n13) at (9,0) {};
		\node (n14) at (10,0)  {};
		\node [label=$D_4$](n15) at (9,1)  {};
		\node (n16) at (10,1) {};
		\foreach \from/\to in {n13/n14,n14/n16,n16/n15,n15/n13,n14/n15}
		\draw (\from) -- (\to);
		
		\node (n17) at (12,0) {};
		\node (n18) at (13,0)  {};
		\node [label=$K_4$](n19) at (12,1)  {};
		\node (n20) at (13,1) {};
		\foreach \from/\to in {n17/n18,n20/n19,n18/n20,n20/n17,n17/n19,n18/n19}
		\draw (\from) -- (\to);
	\end{tikzpicture} 
	\]
	\caption{The 11 unlabeled graphs of order 4.}
	\label{los.11.de.orden4}
\end{figure}
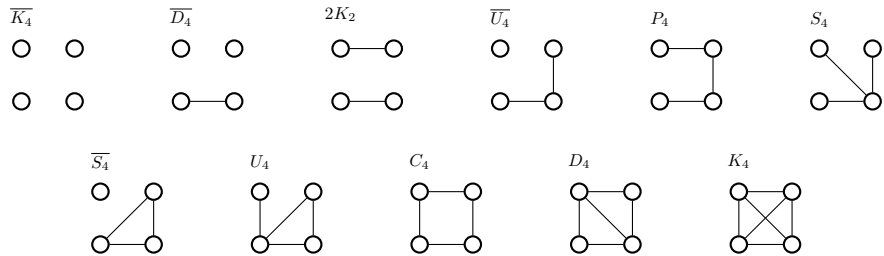
An elementary property of 2-switches (which directly follows from the definition) is that they preserve the degree sequence, i.e., $d(\tau(G))=d(G)$ for every 2-switch $\tau$ on $G$. From it, the following fundamental theorem can be proven.

\begin{theorem}[\cite{chartrand2010graphs}]
	\label{berge's.theorem}
	If $G$ and $H$ are two distinct graphs with the same degree sequence, then there exists a sequence of 2-switches $(\tau_i)_{i=1}^k$ such that $H=\tau_k\ldots\tau_1(G)$. 
\end{theorem}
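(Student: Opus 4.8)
The plan is to argue by induction on the order $n=|G|=|H|$. Since every 2-switch preserves the labeled degree sequence by \eqref{2switch.prop.fund}, the hypothesis that $G$ and $H$ share a degree sequence must mean $\deg_G(v)=\deg_H(v)$ for every vertex $v\in[n]$; this equality is also a necessary condition for $H=\theta(G)$ to exist. For the inductive step I would fix a single vertex $v$ and reduce to $G-v$ and $H-v$. The key lemma to establish is that, using 2-switches, $G$ can be turned into a graph $G^{\ast}$ with $N_{G^{\ast}}(v)=N_H(v)$. Once this holds, $v$ is incident to exactly the same edges in $G^{\ast}$ and in $H$, so $G^{\ast}-v$ and $H-v$ are graphs on $[n]\setminus\{v\}$ with the same degree sequence; the induction hypothesis yields a sequence of 2-switches from $G^{\ast}-v$ to $H-v$, and since none of them touches $v$, each lifts verbatim to a 2-switch on $\mathfrak{G}_n$, producing the desired sequence from $G^{\ast}$ to $H$, and hence from $G$ to $H$.

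To prove the key lemma I would decrease the quantity $|N_G(v)\,\triangle\,N_H(v)|$ one step at a time. While it is positive, the equality $\deg_G(v)=\deg_H(v)$ lets me pick $b\in N_G(v)\setminus N_H(v)$ and $c\in N_H(v)\setminus N_G(v)$; thus $vb\in E(G)$, $vc\notin E(G)$, and $v,b,c$ are distinct. The favorable case is that there is a vertex $d\notin\{v,b,c\}$ with $cd\in E(G)$ and $bd\notin E(G)$: then the 2-switch $\binom{v\ b}{c\ d}$ is active in $G$ (it deletes $vb,cd$ and inserts $vc,bd$), after which $N(v)$ loses $b$ and gains $c$, so $|N(v)\triangle N_H(v)|$ drops by $2$ and we move toward the recursion target.

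The main obstacle is the unfavorable case, where no such $d$ exists, i.e. $N_G(c)\setminus\{b\}\subseteq N_G(b)$. Here I would argue that this configuration cannot occur simultaneously on the $G$ side and on the $H$ side. A short count shows that $N_G(c)\setminus\{b\}\subseteq N_G(b)$, together with $v\in N_G(b)\setminus N_G(c)$, forces $\deg_G(b)\ge \deg_G(c)+1$. The roles of $b$ and $c$ are exactly reversed in $H$ (there $vc\in E(H)$ and $vb\notin E(H)$), so if the unfavorable case held in $H$ as well, the same count would give $\deg_H(c)\ge \deg_H(b)+1$; combined with $\deg_G=\deg_H$ at every vertex this yields $\deg_G(b)>\deg_G(c)>\deg_G(b)$, a contradiction. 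Hence at least one side admits the favorable switch. When it is available only in $H$, I would apply the analogous switch to $H$ instead; collecting the $G$-side switches as $G\to G^{\ast}$ and the $H$-side switches as $H\to H^{\ast}$, at termination $N_{G^{\ast}}(v)=N_{H^{\ast}}(v)$, and the final sequence is obtained by concatenating $G\to G^{\ast}$, the lifted recursion $G^{\ast}\to H^{\ast}$, and the reversal $H^{\ast}\to H$, the last using that each 2-switch has a 2-switch inverse $\tau^{-1}$. I expect the degree-counting inequality and this two-sided bookkeeping to be the only delicate points; the remaining verifications are routine.
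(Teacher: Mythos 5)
Your proof is correct, and a comparison with ``the paper's own proof'' is moot here: the paper does not prove \Cref{berge's.theorem} at all, but quotes it from the literature (\cite{chartrand2010graphs}, page 24). So your argument should be measured against the standard textbook proof, and it is a genuinely different, self-contained route. The classical argument fixes a vertex $w$ of \emph{maximum} degree, shows any realization can be 2-switched until $w$ is adjacent to the $d_w$ vertices of next-largest degree, drives both $G$ and $H$ to this canonical position, deletes $w$, inducts, and reverses the $H$-side switches. You instead fix an \emph{arbitrary} vertex $v$ and match $N(v)$ directly; the price is exactly the two-sided lemma you isolate, and it holds: in the unfavorable case one gets $N_G(c)\setminus\{b\}\subseteq N_G(b)$, whence $\deg_G(b)\ge\deg_G(c)+1$ (the count survives the subcase $bc\in E(G)$, since then $c\in N_G(b)$ compensates for $b\notin N_G(b)$, while $v\in N_G(b)\setminus N_G(c)$ is never hit), the mirror count in $H$ gives $\deg_H(c)\ge\deg_H(b)+1$, and these contradict vertexwise equality of degrees --- which, as you correctly note, is what equality of labeled degree sequences means in this paper's conventions and is also what \eqref{2switch.prop.fund} preserves. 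The remaining steps all check out: with $d\notin\{v,b,c\}$ the switch $\binom{v\ b}{c\ d}$ satisfies all activity conditions of \eqref{2switch.def}; each favorable step lowers $|N_G(v)\,\triangle\,N_H(v)|$ by exactly $2$, so the loop terminates; a 2-switch whose action matrix avoids $v$ is active in $G^{\ast}$ iff it is active in $G^{\ast}-v$ and commutes with deleting $v$ (in effect the paper's \Cref{tau(G-S)=tau(G)-S} with $W=\{v\}$); and the $H$-side switches are undone via $\tau^{-1}$, as licensed by the paper's definition of inverse 2-switch. The only cosmetic caveat: state the induction so as to allow $G=H$ with the empty sequence, since after matching neighborhoods the two smaller graphs may already coincide, while the theorem as phrased assumes $G\neq H$.
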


\Cref{berge's.theorem} is one of the earliest results in this area. Although there is no clear original bibliography, it is probably due to Berge. \\

The \emph{realization graph} (see \cite{arikati1999realization}) $\mathcal{G}(d)$, associated with a degree sequence $d$, is the graph such that 
\begin{enumerate}[(1).]
	\item $V(\mathcal{G}(d))$ is the set of all labeled graphs with degree sequence $d$; 
	
	\item $\{G,H\}\in E(\mathcal{G}(d))$ if and only if $H=\tau(G)$ for some 2-switch $\tau$.
\end{enumerate}
In \Cref{ejemplo_esp_trans} we can see a concrete example of this. 
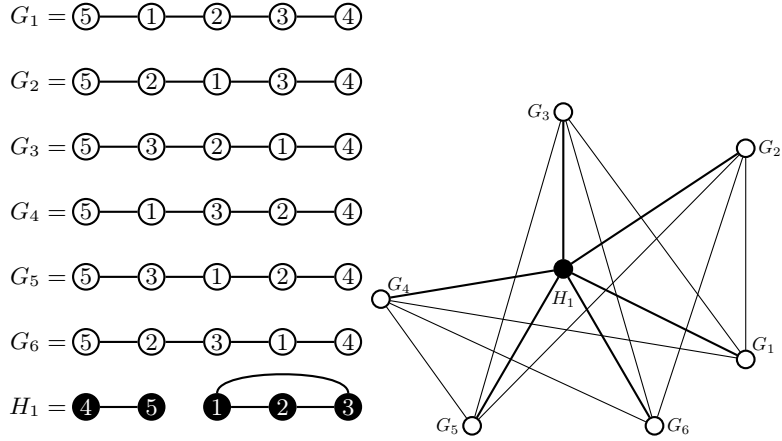
\begin{figure}[h] 
	\centering
	\begin{tikzpicture}[scale=1.15, every node/.style={circle, draw, thick, minimum size=1pt, inner sep=.8pt,font=\small}]
		
		\def\xsep{0.75}  
		\def\ysep{0.75}  
		
		\node (n11) at (1*\xsep, 0) [label=left:{$G_1=$}]{5};
		\node (n12) at (2*\xsep, 0) {1};
		\node (n13) at (3*\xsep, 0) {2};
		\node (n14) at (4*\xsep, 0) {3};
		\node (n15) at (5*\xsep, 0) {4};
		\draw[thick] (n11) -- (n12) -- (n13) -- (n14) -- (n15);
		
		\node (n21) at (1*\xsep, -1*\ysep)[label=left:{$G_2=$}] {5};
		\node (n22) at (2*\xsep, -1*\ysep) {2};
		\node (n23) at (3*\xsep, -1*\ysep) {1};
		\node (n24) at (4*\xsep, -1*\ysep) {3};
		\node (n25) at (5*\xsep, -1*\ysep) {4};
		\draw[thick] (n21) -- (n22) -- (n23) -- (n24) -- (n25);
		
		\node (n31) at (1*\xsep, -2*\ysep)[label=left:{$G_3=$}] {5};
		\node (n32) at (2*\xsep, -2*\ysep) {3};
		\node (n33) at (3*\xsep, -2*\ysep) {2};
		\node (n34) at (4*\xsep, -2*\ysep) {1};
		\node (n35) at (5*\xsep, -2*\ysep) {4};
		\draw[thick] (n31) -- (n32) -- (n33) -- (n34) -- (n35);
		
		\node (n41) at (1*\xsep, -3*\ysep)[label=left:{$G_4=$}] {5};
		\node (n42) at (2*\xsep, -3*\ysep) {1};
		\node (n43) at (3*\xsep, -3*\ysep) {3};
		\node (n44) at (4*\xsep, -3*\ysep) {2};
		\node (n45) at (5*\xsep, -3*\ysep) {4};
		\draw[thick] (n41) -- (n42) -- (n43) -- (n44) -- (n45);
		
		\node (n51) at (1*\xsep, -4*\ysep)[label=left:{$G_5=$}] {5};
		\node (n52) at (2*\xsep, -4*\ysep) {3};
		\node (n53) at (3*\xsep, -4*\ysep) {1};
		\node (n54) at (4*\xsep, -4*\ysep) {2};
		\node (n55) at (5*\xsep, -4*\ysep) {4};
		\draw[thick] (n51) -- (n52) -- (n53) -- (n54) -- (n55);
		
		\node (n61) at (1*\xsep, -5*\ysep)[label=left:{$G_6=$}] {5};
		\node (n62) at (2*\xsep, -5*\ysep) {2};
		\node (n63) at (3*\xsep, -5*\ysep) {3};
		\node (n64) at (4*\xsep, -5*\ysep) {1};
		\node (n65) at (5*\xsep, -5*\ysep) {4};
		\draw[thick] (n61) -- (n62) -- (n63) -- (n64) -- (n65);
		
		\node [color=black,fill=black](n71) at (1*\xsep, -6*\ysep)[label=left:{\textcolor{black}{$H_1=$}}] {\textcolor{white}{4}};
		\node [color=black,fill=black](n72) at (2*\xsep, -6*\ysep) {\textcolor{white}{5}};
		\node [color=black,fill=black](n73) at (3*\xsep, -6*\ysep) {\textcolor{white}{1}};
		\node [color=black,fill=black](n74) at (4*\xsep, -6*\ysep) {\textcolor{white}{2}};
		\node [color=black,fill=black](n75) at (5*\xsep, -6*\ysep) {\textcolor{white}{3}};
		\draw[thick,color=black] (n71) -- (n72)  (n73) -- (n74) -- (n75);
		\draw[thick,color=black] (n73) .. controls +(0,0.45) and +(0,0.45) .. (n75);
		
	\end{tikzpicture} 
	\begin{tikzpicture}[scale=.8, every node/.style={scale=0.7}]
		\node[draw, circle, fill=white, thick] (G_1) at (3.00,-1.50) {};
		\node[above right] at (G_1) {$G_1$};
		\node[draw, circle, fill=white, thick] (G_2) at (3.00,2) {};
		\node at ($(G_2)+(0.4,0)$) {$G_2$};
		\node[draw, circle, fill=white, thick] (G_3) at (0.00,2.60) {};
		\node at ($(G_3)+(-0.4,0)$) {$G_3$};
		\node[draw, circle, fill=white, thick] (G_4) at (-3.00,-0.50) {};
		\node[above right] at (G_4) {$G_4$};
		\node[draw, circle, fill=white, thick] (G_5) at (-1.50,-2.60) {};
		\node at ($(G_5)+(-0.4,0)$) {$G_5$};
		\node[draw, circle, fill=white, thick] (G_6) at (1.50,-2.60) {};
		\node at ($(G_6)+(0.4,0)$) {$G_6$};
		\node[draw, circle, fill=black, thick,color=black] (H_1) at (0.00,0.00) {};
		\node at ($(H_1)+(0,-0.5)$) {\textcolor{black}{$H_1$}};
		\draw (G_1) -- (G_2);
		\draw (G_1) -- (G_3);
		\draw[thick,color=black] (G_1) -- (H_1);
		\draw (G_1) -- (G_4);
		\draw[thick,color=black] (G_2) -- (H_1);
		\draw (G_2) -- (G_5);
		\draw (G_2) -- (G_6);
		\draw[thick,color=black] (G_3) -- (H_1);
		\draw (G_3) -- (G_5);
		\draw (G_3) -- (G_6);
		\draw[thick,color=black] (H_1) -- (G_4);
		\draw[thick,color=black] (H_1) -- (G_5);
		\draw[thick,color=black] (H_1) -- (G_6);
		\draw (G_4) -- (G_5);
		\draw (G_4) -- (G_6);
	\end{tikzpicture}
	\caption{The realization graph $\mathcal{G}(2^31^2)$.}
	\label{ejemplo_esp_trans}
\end{figure}
A direct consequence of \Cref{berge's.theorem} is that $\mathcal{G}(d)$ is connected. In recent years, the problem of determining whether a given induced subgraph of $\mathcal{G}(d)$ is connected or not has been extensively studied. In this direction, many advances have already been made. For example, it is known that $\mathcal{F}(d)$, i.e., the subgraph of $\mathcal{G}(d)$ induced by forests, is connected (see \cite{schvollner_pseudoforests}). An analogous result also holds for unicyclic graphs, connected graphs, and pseudoforests, i.e., graphs whose components are trees or unicyclic graphs (see \cite{schvollner_pseudoforests,taylor2006contrained}). The most important feature of a connected $\mathcal{H}\preceq\mathcal{G}(d)$ is that it ensures the possibility of transforming, via 2-switches, a graph $G\in V(\mathcal{H})$ into another graph $G'\in V(\mathcal{H})$ in such a way that every intermediate graph in the transition also belongs to $V(\mathcal{H})$. If $d=(d_{v})_{v=1}^{n}$ is the degree sequence of $G$, then the sequence $\overline{d}=(n-1-d_{v})_{v=1}^{n}$ is the degree sequence of $\overline{G}$. The following result is stated, without a detailed proof, in \cite{barrus2023cliques} (page 2). Since we have not found an explicit proof in the literature, we include our own for completeness.

\begin{theorem}[\cite{barrus2023cliques}]
	\label{dual.spaces.iso} 
	If $d$ is a degree sequence, then $\mathcal{G}(\overline{d}) \approx \mathcal{G}(d)$.
\end{theorem}

	\begin{proof}
		Let $\varphi: V(\mathcal{G}(d)) \rightarrow V(\mathcal{G}(\overline{d}))$ be defined by $\varphi(X)=\overline{X}$. Since complementation is an involution, $\varphi$ is a bijection with $\varphi^{-1}=\varphi$. Moreover, if $H=\tau(G)$ for some 2-switch $\tau$ on $G$, then $\overline{H}=\tau^{-1}(\overline{G})$, so $\varphi$ maps edges of $\mathcal{G}(d)$ to edges of $\mathcal{G}(\overline{d})$. As $\varphi^{-1}=\varphi$ is of the same form, the same argument shows it also preserves edges. Hence, $\varphi$ is an isomorphism.
	\end{proof}

Let $G$ be a graph with degree sequence $d$. The \emph{2-switch-degree} of $G$ (or simply, the degree), denoted by $\deg(G)$, is the degree of $G$ viewed as a vertex of $\mathcal{G}(d)$, i.e., the number of neighbors of the vertex $G$ in the realization graph associated with $d$. In other words, $\deg(G)$ is the number of 2-switches that we can perform on $G$. As mentioned in \Cref{sec:introduction}, in this article we focus on the study of this parameter. 

A key family of graphs for the study of realization graphs is that of split graphs. A \emph{split graph}
is a graph $G$ whose set of vertices can be partitioned into two sets $K$ and $I$, such that $K$ is a clique  and $I$ is an independent set. Split graphs have been characterized as those that do not contain a $C_4$, $C_5$ or $2K_2$ as induced subgraphs (see \cite{hammer1977split}). If $S\in V(\mathcal{G}(d))$ and $S$ is split, then all the vertices of $\mathcal{G}(d)$ are also split (see \cite{tyshkevich2000decomposition}).

The concept of realization graph has received increasing attention over the years. The study of these objects was formalized in \cite{eggleton2006realizations}, and subsequent work (see \cite{arikati1999realization}) showed that realization graphs of split graphs are isomorphic to interchange graphs of certain matrices. Tyshkevich introduced in \cite{tyshkevich2000decomposition} a decomposition of graphs into indecomposable factors which are all split graphs, except, at most, for one of them. Building on this, Barrus proved in \cite{barrus2016realization} that realization graphs of decomposable graphs can be expressed as the Cartesian product of the realization graphs of the factors, and more recently, cliques in realization graphs and degree sequences whose realization graphs are complete graphs were characterized in \cite{barrus2023cliques}.

	\section{Active vertices} \label{sec:vertices.activos}

A vertex $a$ is active if and only if there are three vertices $b,c,d$, such that $ab, cd$ are disjoint edges of $G$, and $ac, bd$ are non-edges of $G$. In such a case we say that the 2-switch ${{a \ b}\choose{c \ d}}$ \emph{activates} $a$. We denote by $\act(G)$ the set of all active vertices of $G$. We say that the vertex $v$ is \emph{inactive} in $G$ if $v\notin\act(G)$, i.e., if there is no 2-switch on $G$ activating $v$. Thus, $V(G)-\act(G)$ is the set of all inactive vertices of $G$. 
It is important to note that a vertex $ a $ is active in $ G $ if and only if $ a $ is active in $ \overline{G} $. Indeed, if $ \tau $ is a 2-switch that activates $ a $ in $ G $, then $ \tau^{-1} $ activates $ a $ in $ \overline{G} $. Thus,
\[ \act(G) = \act(\overline{G}). \]
From this, it immediately follows that universal vertices are inactive, since they are isolated in the complement graph. 
	
%
	
	The following notation will be useful from now on:
	\[ Q_G = \{ H \preceq G : |V(H)| = 4 \}, \quad Q_G^* = \{ H \in Q_G : \deg(H)\neq 0 \}. \]
	The letter ``$Q$'' stands for ``quartets'', as the members of $Q_G$ has order 4.
	
	A key fact about active and inactive vertices is that they are invariant under $2$-switch. I.e., a vertex $a$ is active in $G$ if and only if it is active in $\tau(G)$ for every 2-switch $\tau$ on $G$. 

	\begin{theorem}
		\label{2switch.preservs.act}
		Let $d$ be a graphical sequence. If $G,H\in V(\mathcal{G}(d))$, then $\act(G) = \act(H)$. In other words, the 2-switch preserves active and inactive vertices.
	\end{theorem}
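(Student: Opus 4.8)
The plan is to bootstrap the single-switch invariance already established in \Cref{actG=act(tauG)} into invariance along an arbitrary transition between two realizations of the same degree sequence. The genuine content of the theorem has in fact already been extracted in the preceding lemmas; what remains is a pure connectivity argument powered by Berge's theorem.

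First I would dispose of the trivial case $G=H$, where there is nothing to prove. So assume $G\neq H$ with $s(G)=s(H)$. Then $G$ and $H$ are two distinct vertices of the realization graph $\mathcal{G}(s)$, where $s=s(G)$, and by \Cref{berge's.theorem} there exists a sequence of 2-switches $(\tau_i)_{i=1}^k$ with
\[ H=\tau_k\cdots\tau_1(G). \]
Next I would set $G_0=G$ and $G_j=\tau_j(G_{j-1})$ for $1\le j\le k$, so that $G_k=H$, and apply \Cref{actG=act(tauG)} to each consecutive pair $G_{j-1}$, $G_j=\tau_j(G_{j-1})$. This produces the chain of equalities
\[ \act(G)=\act(G_0)=\act(G_1)=\cdots=\act(G_k)=\act(H), \]
which is exactly the claim. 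Formally this is a one-line induction on the length $k$ of the switch sequence, with $k=0$ absorbing the degenerate case $G=H$.

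I expect no real obstacle at this stage: the entire difficulty of the theorem is concentrated in \Cref{actG=act(tauG)}, whose proof reduces — via the case analysis of \Cref{lem2K2,lemP4,lem.tau.adds.edge.H,lem.tau.removes.edge.H} — to controlling how a single 2-switch can create or destroy an induced $P_4$, $C_4$, or $2K_2$ through a fixed vertex. Once that local invariance is secured, \Cref{berge's.theorem} supplies the missing global ingredient, namely that any two realizations of $s$ are joined by a finite 2-switch sequence (equivalently, that $\mathcal{G}(s)$ is connected), so that transitivity of equality along the sequence closes the argument.
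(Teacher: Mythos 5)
Your proposal is correct and matches the paper's own argument exactly: the paper also derives the theorem directly from \Cref{berge's.theorem} and \Cref{actG=act(tauG)}, with your chain $\act(G_0)=\act(G_1)=\cdots=\act(G_k)$ being just the explicit induction the paper leaves implicit. Nothing is missing.
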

	
	\begin{proof}
		A vertex is inactive in $G$ if and only if it lies in no induced $2K_2$, $P_4$, or $C_4$; equivalently, if and only if it is an isolated vertex of $A_4(G)$ (see \cite{barrus.west.A4}). Since a 2-switch preserves the degree sequence, $G$ and $\tau(G)$ have the same partition of $V(G)$ into components of the $A_4$-structure, by Corollary 3.14 of \cite{barrus.west.A4} (alternatively, this follows immediately from Proposition 3.12 and Theorem 3.2 of \cite{barrus.west.A4}). In particular, the trivial components of $A_4(G)$ and $A_4(\tau(G))$ coincide; that is, $A_4(G)$ and $A_4(\tau(G))$ have the same isolated vertices, hence the same inactive vertices. Therefore $\act(G)=\act(\tau(G))$.
	\end{proof}
	
	
	Given a graphical sequence $d$ and $G\in V(\mathcal{G}(d))$, \Cref{2switch.preservs.act} justifies the notation $\act(d)$ as an alternative to $\act(G)$.
	
	\begin{theorem}
		\label{same.degree.act.inact}
		If $a$ is an active (inactive) vertex of a graph $G$, then all vertices of $G$ with the same degree (in $G$) as $a$ are active (inactive) in $G$.
	\end{theorem}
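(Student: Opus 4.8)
The plan is to reduce the statement to the already-established invariance of active vertices under degree-sequence-preserving transformations (\Cref{2switch.preservs.act}), by exploiting a relabeling of vertices. It suffices to treat the active case, since ``inactive'' means ``not active''; applying the active statement to two vertices of equal degree then settles the inactive statement by negation.

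Suppose $a$ and $b$ are two vertices of $G$ with $d_a = d_b$ (if $a = b$ there is nothing to prove). Write $V(G) = [n]$ and let $\sigma$ be the transposition of $[n]$ that swaps $a$ and $b$ and fixes every other vertex. Let $G' = \sigma(G)$ be the relabeling of $G$ by $\sigma$, i.e. the labeled graph on $[n]$ for which $\sigma\colon G \to G'$ is a graph isomorphism. First I would check that $G'$ lies in the same realization graph as $G$, that is, $s(G') = s(G)$: relabeling by $\sigma$ sends the degree at $a$ to the $b$-th position and vice versa, leaving every other degree fixed, so $s(G')$ is obtained from $s(G)$ by swapping its $a$-th and $b$-th entries; since $d_a = d_b$, these entries coincide and $s(G') = s(G)$. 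This is the only place the hypothesis $d_a = d_b$ enters.

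Next I would use that being active is an isomorphism invariant. A vertex $v$ is active exactly when it belongs to some induced $H \preceq G$ with $H \approx X \in \{P_4, C_4, 2K_2\}$, and an isomorphism carries induced subgraphs to isomorphic induced subgraphs; hence $\sigma$ maps $\act(G)$ bijectively onto $\act(G')$. In particular, because $\sigma(a) = b$,
\[ a \in \act(G) \iff b \in \act(G'). \]
Finally, since $s(G') = s(G)$, \Cref{2switch.preservs.act} gives $\act(G') = \act(G)$, so the right-hand side becomes $b \in \act(G)$. Combining, $a \in \act(G) \iff b \in \act(G)$, which is exactly the claim for active vertices; negating it yields the inactive case.

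I do not anticipate a genuine obstacle: the whole content lies in recognizing that equal degrees permit a label swap that stays inside $\mathcal{G}(s)$, so that \Cref{2switch.preservs.act} applies. The only point demanding a line of care is the isomorphism-invariance of the active/inactive dichotomy, which is immediate from the characterization of $Q_G^*$ by the induced subgraphs $P_4, C_4, 2K_2$. (As an alternative to the explicit transposition, one could phrase the same argument by noting that any degree-preserving relabeling keeps $G$ within its realization graph and permutes active vertices accordingly, but the single transposition is the cleanest route to comparing the two fixed vertices $a$ and $b$.)
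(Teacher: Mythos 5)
Your proof is correct and takes essentially the same route as the paper: the paper's graph $H$, obtained by swapping the neighborhoods of the two equal-degree vertices, is exactly your relabeled $G'=\sigma(G)$, and it likewise combines $s(H)=s(G)$ with \Cref{2switch.preservs.act} and the isomorphism-invariance of activity under the transposition. The only cosmetic difference is that the paper argues by contradiction (re-invoking \Cref{berge's.theorem} along the way), whereas you state the biconditional directly.
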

	
	\begin{proof}
		Let $d=d(G)$, $x\in V(G)$ with $d_x = d_a$, and let $\varphi = (a \, x)$ be the transposition swapping $a$ and $x$. Let $H = \varphi(G)$ be the graph obtained from $G$ by relabeling its vertices through $\varphi$. Since $d_a = d_x$, we have $d(H)=d$, so $G,H\in V(\mathcal{G}(d))$ and \Cref{2switch.preservs.act} gives $\act(H)=\act(G)$. On the other hand, $\varphi$ is an isomorphism from $G$ to $H$, and isomorphisms map active vertices to active vertices. Hence, $a\in\act(G)$ if and only if $\varphi(a)=x\in\act(H)$. Combining both facts, $x\in\act(H)=\act(G)$ exactly when $a\in\act(G)$. As $x$ was an arbitrary vertex of degree $d_a$, every such vertex is active when $a$ is, and inactive when $a$ is.
	\end{proof}
	
	We say that a graph $ G $ is \emph{active} if $\act(G) = V(G)$. Otherwise, we simply say that $ G $ is \emph{not active} (or non-active). In particular, if $ \act(G) = \varnothing \neq V(G) $, we say that $ G $ is \emph{inactive}. The null graph $K_0=(\varnothing,\varnothing)$ is vacuously active, because $\act(K_0)=\varnothing=V(K_0)$. Apart from $K_0$, it is clear that $|V(G)|\geq 4$ when $G$ is active. Some examples of active graphs are $ C_n $ and $ P_n $, for $ n \geq 4 $. Also, all disconnected graphs without isolated vertices are clearly active. Inactive graphs, on the other hand, are precisely the non-null graphs of degree 0, i.e., threshold graphs (see \cite{mahadev1995threshold}). Thus, the property of being inactive is hereditary. Conversely, the property of being active is clearly non-hereditary, since $ K_1 \preceq G $ for any active graph $ G\neq K_0 $, but $ \act(K_1) \neq V(K_1) $. 
	
	The following theorem is interesting because it establishes that all connected regular graphs are active, except for complete graphs, which are inactive.
	
	\begin{theorem}
		\label{regular.graphs}
		Let $ G $ be a connected regular graph. If $ G $ is not complete, then $ G $ is active. 
	\end{theorem}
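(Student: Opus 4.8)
The plan is to exploit regularity to collapse the problem to ruling out a single possibility. Since $G$ is regular, all of its vertices share a common degree $k$; by \Cref{same.degree.act.inact} this forces an all-or-nothing dichotomy: either every vertex of $G$ is active (so $G$ is active) or every vertex is inactive. Hence it suffices to derive a contradiction from the assumption that all vertices are inactive, which---since a vertex is inactive exactly when it lies in no induced $P_4$, $C_4$, or $2K_2$---is the same as assuming that $G$ contains no induced $P_4$, $C_4$, or $2K_2$ at all. The degenerate cases are immediate: a connected $k$-regular graph with $k\le 1$ is $K_1$ or $K_2$, both complete, so under the hypotheses $k\ge 2$.

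The heart of the argument is to show that this forbidden-subgraph assumption forces $G$ to be complete multipartite. First I would prove that any two non-adjacent vertices $x,v$ have comparable neighborhoods: if there were $a\in N_G(x)\setminus N_G(v)$ and $b\in N_G(v)\setminus N_G(x)$, then $x,a,v,b$ would be four distinct vertices inducing either a $2K_2$ (if $ab\notin E(G)$) or a $P_4$ of the form $x\,a\,b\,v$ (if $ab\in E(G)$), contradicting the assumption. Thus $N_G(x)\subseteq N_G(v)$ or $N_G(v)\subseteq N_G(x)$; and since $G$ is regular these sets have equal size $k$, so in fact $N_G(x)=N_G(v)$. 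Consequently, for distinct vertices, being non-adjacent is equivalent to having the same neighborhood. Grouping the vertices with a common neighborhood yields classes that are independent sets (two vertices with the same neighborhood cannot be adjacent, else one would lie in its own neighborhood), while any two vertices in different classes must be adjacent; that is, $G$ is complete multipartite, and regularity makes all of its parts equal in size.

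Finally I would close the loop: since $G$ is not complete, some part contains two vertices $a,a'$, and since $k\ge 2$ there is a second part containing vertices $b,b'$; then $\{a,a',b,b'\}$ induces a $C_4$ (with non-edges $aa'$ and $bb'$), contradicting the forbidden-subgraph assumption. This contradiction shows $G$ cannot be all-inactive, so by the dichotomy $G$ is active. The main obstacle is precisely the structural step identifying $G$ as complete multipartite: the key realization is that regularity upgrades the neighborhood \emph{inclusion}, which mere $\{P_4,C_4,2K_2\}$-freeness supplies, to neighborhood \emph{equality}, and care is only needed to verify that the four vertices produced at each stage are genuinely distinct. A shorter but less self-contained alternative is to invoke the fact recorded above that inactive graphs are exactly the threshold graphs: a connected threshold graph has a universal vertex, and regularity then forces $G=K_{|G|}$, contrary to hypothesis.
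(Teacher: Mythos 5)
Your proof is correct, but it follows a genuinely different route from the paper. Both arguments begin the same way: regularity plus \Cref{same.degree.act.inact} reduces everything to refuting the all-inactive case, i.e.\ the case where $G$ has no induced $P_4$, $C_4$, or $2K_2$. From there the paper stays local: it notes that $P_4$-freeness and incompleteness force $\diam(G)=2$, picks an induced path $abc$, and observes that every neighbor $v_i$ of $a$ other than $b$ must be adjacent to $b$ (else $v_iabc$ induces a $P_4$ or, when $v_ic\in E(G)$, the four vertices induce a $C_4$), yielding $d_b\geq k+1$, a degree contradiction. You instead prove a global structure theorem: $\{P_4,2K_2\}$-freeness gives comparable neighborhoods for non-adjacent vertices, regularity upgrades inclusion to equality, and hence $G$ is complete multipartite with equal parts; connectivity and incompleteness then produce two parts of size at least $2$ and an induced $C_4$, contradicting $C_4$-freeness. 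Your distinctness checks and the equal-part-size step are all sound, and the degenerate cases $k\leq 1$ are handled correctly. What your approach buys is structural insight: you are in effect re-deriving, in the regular case, the Chv\'atal--Hammer characterization of threshold graphs as the $\{P_4,C_4,2K_2\}$-free graphs, which dovetails with the paper's earlier remark that inactive graphs are precisely the threshold graphs; your closing alternative (a connected threshold graph has a universal vertex, and regularity then forces $G$ complete) is likewise valid and is arguably the shortest correct argument available given that remark. What the paper's argument buys is brevity and self-containment: a four-line counting contradiction with no classification step. Either proof would serve; yours is longer but more informative.
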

	
	\begin{proof}
		We prove the contrapositive. By \Cref{same.degree.act.inact}, all the vertices of $G$ are inactive, i.e., $G$ is a threshold graph. A connected threshold graph has a universal vertex (since the last digit of the binary sequence that determines it must be a 1, see \cite{mahadev1995threshold}). Therefore, $k=|V(G)|-1$, and so $G$ is complete.
	\end{proof}
	
	
	\begin{theorem}
		\label{inactive.implies.connected}
		Let $d$ be a degree sequence of a graph without isolated vertices. If $V(\mathcal{G}(d))$ contains a non-active graph, then all members of $V(\mathcal{G}(d))$ are connected. 
	\end{theorem}   
	
	\begin{proof}
		Suppose $V(\mathcal{G}(d))$ contains a disconnected graph $G$. Since each component of $G$ is non-trivial, $G$ is clearly active. Thus, by \Cref{2switch.preservs.act}, we conclude that every member of $V(\mathcal{G}(d))$ is active.
	\end{proof}
	
	If $G$ is a graph and $x,y\in V(G)$, we denote the distance between $x$ and $y$ in $G$ by $\dist_G(x,y)$, and the diameter of $G$ by $\diam(G)$. The \emph{eccentricity} $ \ecc_G(x) $ of a vertex $ x $ in $ G $ is a measure describing the greatest distance between $ x $ and any other vertex in $ G $. Formally, $\ecc_G(x) = \max\{ \dist_G(v, x) : v \in V(G) \}$.	In the following proposition, we show that in a graph without isolated vertices, inactive vertices have eccentricity 1 or 2.
	
	\begin{proposition}
		\label{eccen.inact.vertex}
		Let $ G $ be a graph without isolated vertices. If $ x \notin \act(G) $, then $ \dist_G(x, v) \leq 2 $, for every $ v \in V(G) $. In other words: $\ecc_G(x) \in \{1, 2\}$.
	\end{proposition}
	
	\begin{proof}
		If $ v $ is a vertex at distance $\ell$ from $ x $ (in $ G $), we can find an induced path of length $\ell$ connecting $ x $ to $ v $ in $ G $. If $ \ell\geq 3 $, then there exist vertices $ y $ and $ z $ such that $ \binom{x \, y}{z \, v} $ is a 2-switch on $ G $. This means $ x \in \act(G) $.
	\end{proof}
	
	\begin{corollary}
		\label{inactive.implies.diam<=3}
		Let $ G $ be a graph without isolated vertices. If $ G $ is not active, then $ \diam(G) \leq 3 $. Moreover, this bound is sharp. 
	\end{corollary}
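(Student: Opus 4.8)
The plan is to extract an inactive vertex, bound its eccentricity by $2$ via \Cref{eccen.inact.vertex}, and then argue that no two vertices can sit at distance exactly $4$. Since $G$ is not active and has no isolated vertices, there is a vertex $x\notin\act(G)$, and \Cref{eccen.inact.vertex} gives $ecc_G(x)\le 2$. For arbitrary $u,v\in V(G)$ the triangle inequality yields $dist_G(u,v)\le dist_G(u,x)+dist_G(x,v)\le 4$, so the whole content is to rule out $dist_G(u,v)=4$. I stress this because the naive bound only gives $\diam(G)\le 4$; the real work is improving $4$ to $3$.

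Suppose, for contradiction, that $dist_G(u,v)=4$. Together with $dist_G(u,x),dist_G(x,v)\le 2$ this forces $dist_G(u,x)=dist_G(x,v)=2$, so I may choose a common neighbor $a$ of $u$ and $x$ and a common neighbor $b$ of $x$ and $v$. I would first record the distinctness and non-adjacencies needed, all as direct consequences of $dist_G(u,v)=4$: the vertices $u,a,x,b$ are distinct, and $ux,ab,ub\notin E(G)$. Indeed, $ux\notin E(G)$ because $dist_G(u,x)=2$; and if any of $a=b$, $a\sim b$, or $u\sim b$ held, then $u$ and $v$ would be joined by a walk of length at most $3$ (namely $u\,a\,v$, $u\,a\,b\,v$, or $u\,b\,v$ respectively), contradicting $dist_G(u,v)=4$.

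With those facts in hand, $u-a-x-b$ is an \emph{induced} $P_4$ containing $x$, so $x\in\act(G)$ by the very definition of an active vertex, contradicting $x\notin\act(G)$. Hence $dist_G(u,v)\le 3$ for all $u,v\in V(G)$, i.e. $\diam(G)\le 3$. The step I expect to be the crux is the non-adjacency $ab\notin E(G)$: it is precisely what upgrades the walk $u-a-x-b$ into an \emph{induced} $P_4$ (otherwise $\{u,a,x,b\}$ could induce a paw, which certifies nothing), and it is forced only because an edge $ab$ would shortcut $u$ to $v$ in three steps.

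For sharpness I would exhibit a single graph without isolated vertices that is not active and has diameter exactly $3$; the bull graph $B$ serves. Take a triangle on $\{a,b,x\}$ together with pendants $u\sim a$ and $v\sim b$, so $E(B)=\{ab,ax,bx,au,bv\}$. Here $u-a-b-v$ is an induced $P_4$, whence $u,a,b,v$ are active; on the other hand a short inspection of the four-element subsets of $V(B)$ containing the apex $x$ shows that none of them induces $P_4$, $C_4$ or $2K_2$, so $x$ is inactive and $B$ is not active. Finally $dist_B(u,v)=3$ (the path $u-a-b-v$ is shortest), so $\diam(B)=3$, showing the bound $\diam(G)\le 3$ cannot be improved.
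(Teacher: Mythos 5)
Your proof is correct and follows essentially the same route as the paper: extract an inactive vertex $x$, use \Cref{eccen.inact.vertex} and the triangle inequality to get the bound $4$, then rule out distance exactly $4$ by producing an induced path through $x$ (the paper takes the shortest $u$--$v$ path as an induced $P_5$, while you verify the non-adjacencies $ux,ub,ab$ directly to get the induced $P_4$ $uaxb$ --- a cosmetic difference). Your sharpness example, the bull, is precisely the paper's $K_3$ with two leaves (degree sequence $3^2 2^1 1^2$), and your inactivity check for the apex is fine.
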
  
	
	\begin{proof}
		By \Cref{inactive.implies.connected}, we know $ G $ is connected. Consider a vertex $ x \notin \act(G) $. By the triangle inequality and \Cref{eccen.inact.vertex}, for any pair of vertices $ v, w \in G $, we have that  
		\begin{equation*}
			\dist_G(v, w) \leq \dist_G(v, x) + \dist_G(x, w) \leq 2 + 2 = 4.
		\end{equation*}
		By \Cref{eccen.inact.vertex}, equality holds if and only if $ \dist_G(v_0, x) = \dist_G(x, w_0) = 2 $ for some $ v_0, w_0 \in V(G) $. But in this case, $ G $ would contain an induced path of the form $v_0yxzw_0 \approx P_5$, for some $y,z\in V(G)$, forcing $ x $ to be active. Therefore, $ \dist_G(v, w) < 4 $ for every pair $ v, w \in V(G) $, and thus $ \diam(G) \leq 3 $. Finally, to see that the bound is sharp, consider the degree sequence $3^22^11^2$ (a $K_3$ with 2 leaves). 
	\end{proof}
	
	The converses of \Cref{inactive.implies.connected,inactive.implies.diam<=3} are not true: $ P_4 $ is a connected graph with diameter 3 where all its vertices are active. Even if the diameter of a graph $ G $ were 2, we cannot guarantee in general that $ G $ has any inactive vertices; for example, $ C_4 $ is an active graph with diameter 2. It is useful to rephrase \Cref{inactive.implies.diam<=3} in the following equivalent way.
	
	\begin{corollary}
		\label{inactive.implies.diam<=3.contrarr}
		Let $d$ be a graphical sequence without zeros. If $ V(\mathcal{G}(d)) $ contains a graph of diameter $ \geq 4 $, then all members of $ V(\mathcal{G}(d)) $ are active.
	\end{corollary} 
	
	
	\section{Active vertices in split graphs} \label{sec:act_vert_split}
	
	Recall from \Cref{sec:prelim} that, being $S$ a split graph, we can write $V(S)=K\dot{\cup}I$, where $K$ is a clique and $I$ is an independent set. In such a case, we say that the pair $(K,I)$ is a \emph{bipartition} for $S$. 
	The notation $(S,K,I)$ means that we are considering $S$ together with a bipartition $(K,I)$ for it. By $\omega(G)$ and $\alpha(G)$ we denote, respectively, the \emph{clique number} and the \emph{independence number} of a graph $G$. A split graph $(S,K,I)$ is said to be \emph{balanced} if $(|K|,|I|)=(\omega(S), \alpha(S))$, and \emph{unbalanced} otherwise. A split graph $S$ is balanced if and only if $S$ has a unique bipartition (see \cite{splitnordhausgaddum}). Two bipartitions $(K,I)$ and $(K',I')$ are equal if $K=K'$ and $I=I'$. A vertex $w$ of $S$ is said to be \emph{swing} if there exist bipartitions $(K,I), (K',I')$ for $S$ such that $w\in K$ and $w\in I'$. The set of swing vertices of $S$ is denoted by $W(S)$; $S$ is unbalanced if and only if $W(S)\neq\varnothing$ (see Section 4 of \cite{jaume2026nullspace}). 
	
	Observe that a vertex $ v $ is active in a split graph $ (S, K, I) $ if and only if there exists an $ H \preceq S $ such that $ v \in H $ and $ H \approx P_4 $. This is because split graphs do not contain induced subgraphs isomorphic to $ C_4 $ or $ 2K_2 $. If $ v \in I $, such an $ H $ will be of the form $vabc$, where $ a, b \in K $ and $ c \in I $. If instead $ v \in K $, $ H $ can be written as $avbc$, where $ a, c \in I $ and $ b \in K $.
	
	If $(S,K,I)$ is a split graph, define 
	\[ E(K)=\{xy\in E(S): x,y\in K\}. \]
	Note that $S-E(K)$ is a bipartite graph with the same bipartition as $S$.
	
	\begin{proposition}
		\label{split.caract.vert.activos}
		If $ (S, K, I) $ is a split graph, then the following holds:
		\begin{enumerate}
			\item Let $U$ be the set of universal vertices of $S$. If $I\neq\varnothing$, then $U=\bigcap_{v \in I} N_S(v)$ if and only if $S$ is not complete;
			
			\item $ u \in I $ is active in $ S $ if and only if there exists an $ x \in I $ such that $ N_S(x) - N_S(u) \neq \varnothing $ and $ N_S(u) - N_S(x) \neq \varnothing $;
			
			\item $ u \in I $ is inactive in $ S $ if and only if, for every $ v \in I $, either $ N_S(v) \subseteq N_S(u) $ or $ N_S(u) \subseteq N_S(v) $;
			
			\item $ u \in K $ is inactive in $ S $ if and only if, for every $ v \in K $, either $ N_S(v) \cap I \subseteq N_S(u) \cap I $ or $ N_S(u) \cap I \subseteq N_S(v) \cap I $; 
			
			\item A vertex of $ S $ is inactive in $ S $ if and only if its neighborhood in $S-E(K)$ is comparable by inclusion with all other neighborhoods in its partition;
			
			\item If $S$ is unbalanced and $ w\in W(S) $, then $ w \notin \act(S) $.
			
			\item If $ S $ is active, then $ S $ is balanced.
		\end{enumerate}
	\end{proposition}
	
	\begin{proof}
		\begin{enumerate}[(1).]
			\item ($\Leftarrow$) Suppose $S$ is not complete. We first claim that
				$U \subseteq K$. Indeed, if a universal vertex $u$ belonged to $I$, then,
				since $I$ is independent, we would have $I = \{u\}$; but then every two
				vertices of $K \cup u=V(S)$ would be adjacent, making $S$ complete, a
				contradiction. Now let $u \in U$. Since $u \notin I$, $u$ is adjacent to
				every $v \in I$, so $u \in \bigcap_{v \in I} N_v$. Reciprocally, assume
				$u \in \bigcap_{v \in I} N_v$. Then, $u \in K$, since $u$ is adjacent to
				some vertex of $I$ and no vertex of $I$ has neighbors in $I$. Hence,
				$N_u = (K - u) \,\dot\cup\, I$, which means that $u \in U$.
				
				($\Rightarrow$) We prove the contrapositive. Suppose $S$ is complete. Since
				$I$ is independent and $I \neq \varnothing$, we have $I = \{w\}$ for some $w$.
				Then $w \in U$, but $w \notin N_w = \bigcap_{v \in I} N_v$. Therefore,
				$U \neq \bigcap_{v \in I} N_v$.
			 
			\item If there exist $ a \in N_x - N_u $ and $ b \in N_u - N_x $, then $ubax\preceq S$, showing that $ u \in \act(S) $. Conversely, if $ u \in \act(S) $, then there exist $ a, b \in K $ and $ x \in I $ such that $ubax\preceq S$, so $ a \notin N_u $ and $ b \notin N_x $. Thus, both $ N_u - N_x $ and $ N_x - N_u $ are non-empty.
			
			\item This is simply the contrapositive of (2).
			
			\item If $u\in K$ is inactive in $S$, then, in $\overline{S}$, the same vertex is also inactive. Note that $(I,K)$ is a bipartition for $\overline{S}$. Thus, by (3), we have $ N_{\overline{S}}(v) \subseteq N_{\overline{S}}(u) $ or $ N_{\overline{S}}(u) \subseteq N_{\overline{S}}(v) $ for every $ v \in K $. Equivalently: $ N_v^c - v \subseteq N_u^c - u $ or $ N_u^c - u \subseteq N_v^c - v $, i.e., $ N_v \cup v \supseteq N_u \cup u $ or $ N_u \cup u \supseteq N_v \cup v $. Since $ N_x = (N_x \cap I) \cup (K - x) $ for every $ x \in K $, it follows that $ (N_v \cap I) \cup K \supseteq (N_u \cap I) \cup K $ or $ (N_u \cap I) \cup K \supseteq (N_v \cap I) \cup K $. Therefore, we conclude that $ N_v \cap I \supseteq N_u \cap I $ or $ N_u \cap I \supseteq N_v \cap I $.
			
			For the converse, suppose that $u\in \act(S)$. Then, there exist $ a, b \in I $ and $ x \in K $ such that $auxb\preceq S$, so $ a \notin N_x\cap I $ and $ b \notin N_u\cap I $. Thus, both $ (N_u\cap I) - (N_x\cap I) $ and $ (N_x\cap I) - (N_u\cap I) $ are non-empty.
			
			\item Combining (3) and (4) yields the result. 
			
			\item Consider the $ABC$-partition of $S$ (see Section 1 of \cite{splitnordhausgaddum}). By Theorem 4.4 of \cite{jaume2026nullspace}, we know that $W(S)=A$, so every swing vertex of $S$ is adjacent to all vertices of $B$ and non-adjacent to all vertices of $C$, by Theorem 2 of \cite{splitnordhausgaddum}. Moreover, $B$ is a clique of $S$, and $A$ is a clique or an independent set of $S$ of mutually twin vertices, again by Theorem 2 of \cite{splitnordhausgaddum}. Then, it is clear that any member of $Q_S$ containing a swing vertex cannot be isomorphic to $P_4$. Hence, $W(S)\cap\act(S)=\varnothing$.  
			
			\item If $ S $ is unbalanced, then $ S $ has a swing vertex, which is inactive by (6). Thus, $ \act(S) \neq V(S) $, which means that $S$ is not active.
		\end{enumerate}
	\end{proof} 
	
	It is important to note that the converse of statement (7) in \Cref{split.caract.vert.activos} is not true in general, as there exist balanced non-active graphs. Indeed, let $ S $ be the split graph with bipartition $ (K, I) = ([3], \{a, b, c\}) $, where $ N_a = \{1, 3\} $, $ N_b = \{2, 3\} $, and $ N_c = \{3\} $. It is easy to check by inspection that $(K,I)$ is the unique bipartition for $S$, showing that $ S $ is balanced. However, the vertices $ c $ and $ 3 $ are inactive in $ S $.
	
	\begin{proposition}
		\label{balanced_implies_|U|<|K|-1}
		Let $ (S, K, I) $ be a balanced split graph. Then, the following holds:
		\begin{enumerate}
			\item $ \deg(S) \geq 1 $, and $ |K|, |I| \geq 2 $; 
			\item $ K = \bigcup_{v \in I} N_S(v) $; 
			\item $ |U| \leq |K| - 2 $.
		\end{enumerate}
	\end{proposition}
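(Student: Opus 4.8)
The plan is to derive all three parts from a single structural fact: a balanced split graph must contain an induced $P_4$, and in a split graph every such $P_4$ has the form $i_1 k_1 k_2 i_2$ with $k_1,k_2\in K$ and $i_1,i_2\in I$ (the two clique vertices occupy the middle and the two independent vertices the ends, since the clique vertices must be adjacent and the independent ones not). The engine of the argument is the characterization \emph{balanced $=$ no swing vertices} together with \Cref{split.caract.vert.activos}. I would first record the two concrete shapes of a swing vertex: $v\in I$ is swing exactly when $N_S(v)=K$, and $k\in K$ is swing exactly when $N_S(k)\cap I=\varnothing$ (equivalently $N_S(k)=K-k$).

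The heart of the proof is producing the $P_4$. After discarding the degenerate case $I=\varnothing$ (there $S$ is complete or empty and every vertex is swing), I would show that the family $\{N_S(v):v\in I\}$ cannot be a chain. If it were, a neighborhood-maximal $v^*\in I$ would satisfy $\bigcup_{v\in I}N_S(v)=N_S(v^*)$; then either $N_S(v^*)=K$, so $v^*$ is swing, or some $k\in K$ is covered by no $N_S(v)$, so $k$ is swing — each contradicting balance. Hence some $N_S(i_1),N_S(i_2)$ are incomparable, producing $k_1\in N_S(i_1)\setminus N_S(i_2)$ and $k_2\in N_S(i_2)\setminus N_S(i_1)$; since $k_1k_2\in E(S)$ (clique) and $i_1i_2\notin E(S)$ (independent), the path $i_1k_1k_2i_2$ is an induced $P_4$.

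Everything then falls out. For part (1): $k_1,k_2$ force $|K|\geq 2$, $i_1,i_2$ force $|I|\geq 2$, and the 2-switch $\binom{i_1\ k_1}{k_2\ i_2}$ is active because its new edges $i_1k_2,k_1i_2$ are non-edges of the path, so $\deg(S)\geq 1$. For part (2) I would argue directly from balance rather than from the $P_4$: no $k\in K$ is swing, so $K-k\subsetneq N_S(k)$, whence $k$ has a neighbor in $I$; thus $K\subseteq\bigcup_{v\in I}N_S(v)$, and the reverse inclusion is automatic. For part (3), recall $U=\bigcap_{v\in I}N_S(v)$ from \Cref{split.caract.vert.activos}(1); then $U\subseteq N_S(i_1)\subseteq K$, while $k_1\notin N_S(i_2)$ and $k_2\notin N_S(i_1)$ exhibit two distinct vertices $k_1,k_2\in K\setminus U$, so $|U|\leq|K|-2$.

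The one genuinely delicate step is the chain argument that forces an incomparable pair of independent-vertex neighborhoods: this is the only place where balance is used essentially, and it requires carefully splitting on whether the maximal neighborhood $N_S(v^*)$ equals $K$ or omits a clique vertex, while keeping the empty-$I$ (and empty-$K$) degeneracies out of the way. Once the induced $P_4$ is in hand, parts (1)--(3) are routine bookkeeping.
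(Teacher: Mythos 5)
Your proof is correct, but it reaches the conclusions by a genuinely different route than the paper. For part (1), the paper simply cites the fact that threshold graphs are unbalanced (\cite{whitman2020split}) to get $\deg(S)\geq 1$, and then extracts an induced $P_4$ from any active 2-switch; you instead manufacture the induced $P_4$ from first principles via the chain argument on $\{N_S(v):v\in I\}$ — in effect re-proving, in the balanced setting, the activity criterion of item (2) of \Cref{split.caract.vert.activos} — so your argument is self-contained except for the swing-vertex characterization of balance, which the paper states in the text anyway. Part (2) is identical in both proofs (a clique vertex outside $\bigcup_{v\in I}N_S(v)$ would be swing). The real divergence is part (3): the paper rules out $|U|=|K|-1$ by a case analysis passing to the complement $\overline{S}$ and exhibiting a swing vertex there (implicitly using that $S$ is balanced iff $\overline{S}$ is), whereas your incomparable pair immediately yields two distinct non-universal clique vertices $k_1\notin N_S(i_2)$ and $k_2\notin N_S(i_1)$, giving $|U|\leq |K|-2$ in one line; this is cleaner and avoids the complement entirely. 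What each approach buys: the paper's is shorter where it can lean on the cited threshold fact, while yours concentrates all the work in one structural lemma whose payoff is then purely mechanical. One shared caveat: the formally degenerate empty split graph has a unique bipartition yet violates $|K|,|I|\geq 2$, so both you and the paper implicitly assume $S$ is nontrivial; your explicit discarding of the $I=\varnothing$ and $K=\varnothing$ cases is at the same level of rigor as the paper's appeal to the threshold characterization.
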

	
	\begin{proof}
		\begin{enumerate}[(1).]
			\item If $ \deg(S) = 0 $, then $ S $ would be threshold and thus unbalanced (see \cite{splitnordhausgaddum} or Section 4 of \cite{jaume2026nullspace}). Hence, $ \deg(S) \geq 1 $, which implies that $ S $ contains an induced $ P_4 $. Consequently, $ |I|, |K| \geq 2 $. 
			\item Clearly, $ \bigcup_{v \in I} N_v \subseteq K $. If the inclusion were strict, there would be a vertex $ w \in K $ with no neighbors in $ I $. But then $(K-w,I\cup w)$ would be a bipartition for $S$ different from $(K,I)$, contradicting that $ S $ is balanced. 
			\item Since $ S $ is balanced, $\deg_S(v) < |K| $ for every $ v \in I $ (otherwise, $(K,I)$ would no longer be the unique bipartition for $S$). Thus, $ |U| < |K| $ by (2). To complete the proof, it suffices to show that $ |U| \neq |K| - 1 $. Suppose $ |U| = |K| - 1 $. Let $ x $ be the only vertex of $K$ satisfying that $\deg_S(x)<|V(S)|-1$. By (1), there exists $ w \in N_S(x) \cap I $. Clearly, since $S$ is balanced, $\overline{S}$ is balanced as well and so $(I,K)$ is its unique bipartition. Now, observe that $I-w\subseteq N_{\overline{S}}(w)$, $\overline{S}$ has $|K|-1$ isolated vertices, $\deg_{\overline{S}}(x)\neq 0$ and $ w x \notin E(\overline{S}) $. Thus, it must be $N_{\overline{S}}(w)=I-w$, which means that $ w $ has no neighbor in the independent part $K$ of $ \overline{S} $. Therefore, $(I-w, K\cup w)$ is a bipartition for $\overline{S}$ different from $(I,K)$, contradicting that $\overline{S}$ is balanced.
		\end{enumerate}
	\end{proof} 
	
	If $(S,K,I)$ is a split graph and $G$ is a graph disjoint from $S$, the \emph{Tyshkevich composition} $S\circ G$ of $S$ and $G$ is defined as the graph whose vertex set is $V(S\circ G)=V(S)\cup V(G)$ and whose edge set is 
	\[ E(S\circ G)=E(S)\cup E(G)\cup\{xy:x\in K,y\in G\}. \]
	This operation was introduced by R. Tyshkevich in \cite{tyshkevich2000decomposition}. 
	In general, $S\circ G$ is split if and only if $G$ is.
The next proposition, which is of independent interest, shows that balancedness behaves in a remarkably rigid way under Tyshkevich composition: whether $S_1 \circ S_2$ is balanced depends only on the second factor. Since the Tyshkevich composition is associative, it follows by iteration that a composition $S_1 \circ \cdots \circ S_k$ of split graphs is balanced if and only if its last factor $S_k$ is.
	
	\begin{proposition}
		\label{S_1S_2_balanced_iff_S_2_balanced}
		Let $S_1$ and $S_2$ be split graphs. Then, $S_1\circ S_2$ is unbalanced if and only if $S_2$ is unbalanced.
	\end{proposition}
	
	\begin{proof}
		For $i\in\{1,2\}$, let $(K_i,I_i)$ be a bipartition for $S_i$, and let $S=S_1\circ S_2$. Then, $(K,I)=(K_1\dot{\cup} K_2, I_1\dot{\cup} I_2)$ is a bipartition for $S$. In particular, $\omega(S)\ge |K_1|+|K_2|$. Moreover, for every $v\in V(S_2)$, $N_S(v)=N_{S_2}(v)\dot\cup K_1$ and $\deg_S(v)=\deg_{S_2}(v)+|K_1|$.
		
		\smallskip
		($\Rightarrow$). We prove the contrapositive. 
		Since $S_2$ is balanced, $(K_2,I_2)$ is its unique bipartition, $\omega(S_2)=|K_2|$, and $|K_2|,|I_2|\ge 2$ by \Cref{balanced_implies_|U|<|K|-1}(1). Suppose, for contradiction, that $S$ is unbalanced, and let $w\in W(S)$. By Theorem 4.4 of \cite{jaume2026nullspace}, $\deg_S(w)=\omega(S)-1$. Furthermore, by Corollary 4.8 of \cite{jaume2026nullspace}, $N_S(w)\cup w$ is a	(maximum) clique of $S$, so $N_S(w)$ is a clique of $S$. We rule out each possible location of $w$. Since $V(S)=K_1\cup K_2\cup I_1\cup I_2$, we have to analyze 4 cases.
		\begin{enumerate}[(1).]
			\item If $w\in K_1$, then $w$ is adjacent to all of $V(S_2)$, so $I_2\subseteq N_S(w)$. But $N_S(w)$ induces a clique while $I_2$ is independent with $|I_2|\ge 2$, a contradiction. Hence, $K_1\cap W(S)=\varnothing$.
			
			\item If $w\in K_2$, then $K_2-w\subseteq N_{S_2}(w)$. Since $N_{S_2}(w)\cup\{w\}$ is a clique of $S_2$, we also have $|N_{S_2}(w)|+1\le\omega(S_2)=|K_2|$. Thus, $N_{S_2}(w)=K_2-w$, and so $w$ has no neighbor in $I_2$. Hence, $(K_2-w, I_2\cup w)$ is a bipartition of $S_2$ distinct from $(K_2,I_2)$, contradicting that $S_2$ is balanced. Therefore, $K_2\cap W(S)=\varnothing$.
			
			\item If $w\in I_1$, then $N_S(w)=N_{S_1}(w)
			\subseteq K_1$ and $\deg_S(w)\le|K_1|$. But, $\deg_S(w)=\omega(S)-1\ge|K_1|+|K_2|-1\ge|K_1|+2-1$, a contradiction. Hence $I_1\cap W(S)=\varnothing$.
			
			\item If $w\in I_2$, then $N_{S_2}(w)\subseteq K_2$. Moreover, $N_{S_2}(w)\ne K_2$, for otherwise $K_2\cup w$ would be a clique of $S_2$ of size $|K_2|+1>\omega(S_2)$. Hence, $\deg_{S_2}(w)\le|K_2|-1$, so $\omega(S)-1=\deg_S(w)=\deg_{S_2}(w)+|K_1|\le|K_1|+|K_2|-1$, i.e.,	$\omega(S)\le|K_1|+|K_2|$. Together with $\omega(S)\ge|K_1|+|K_2|$ this gives
			$\omega(S)=|K_1|+|K_2|$, so $K$ is a maximum clique of $S$. As $S$ is unbalanced and	$K$ is a maximum clique (hence a clique-side of a bipartition), $K\cap W(S)\ne\varnothing$. But, $K\cap W(S)=(K_1\cup K_2)\cap W(S)=\varnothing$ by (1) and (2), a contradiction.
		\end{enumerate}
		Thus, $w\notin V(S)$, which is absurd. Hence, $S$ is balanced. 
	 
		\smallskip
		($\Leftarrow$). Since $S_2$ is unbalanced, we can assume, without loss of generality, that $|K_2|=\omega(S_2)$. Then, by Corollary 4.8 of \cite{jaume2026nullspace}, there exists a vertex $w\in K_2\cap W(S_2)$. Consequently, $(K-w, I\cup w)$ is a bipartition for $S$ different from $(K,I)$, which means that $S$ is unbalanced.
	\end{proof}

	\begin{lemma}
		\label{lemma_S-E(K)_4cycle}
		Suppose that $S = (S_1, K_1, I_1) \circ (S_2, K_2, I_2)$ is a split graph
		with $I_2 \neq \emptyset$, and let $K = K_1 \cup K_2$. If
		$\deg_S(a) \geq 2$ for some $a \in I_1$, then $S - E(K)$ contains a 4-cycle.
	\end{lemma}
	
	\begin{proof}
		Since $N_S(a) \subseteq K_1$, there exist distinct $x, y \in N_S(a) \cap K_1$.
		If $b \in I_2$, then $xb, yb \in E(S)$ by the definition of $\circ$, so
		$axbya$ is a 4-cycle in $S$. As each of its edges joins a vertex of $K$ to a
		vertex of $I_1 \cup I_2$, none of them belongs to $E(K)$. Hence, $axbya$ is
		a 4-cycle in $S - E(K)$.
	\end{proof}
	
	A graph $G$ is \emph{decomposable} if there exist graphs $S,H\neq K_0$ such that $S\circ H=G$. Otherwise, $G$ is said to be \emph{indecomposable}. The following result is a useful tool to detect whether a balanced split graph $S$ is indecomposable, provided that $S$ has certain restrictions on its degree sequence. 
	
	\begin{theorem}
		\label{indecomposable_test_split}
		Let $(S,K,I)$ be a balanced split graph such that 
		\[ \deg_S(v)\notin\{0,1,|V(S)|-1\} \]
		for all $v\in V(S)$. If $S-E(K)$ does not contain 4-cycles or does not contain 6-cycles, then $S$ is indecomposable. 
	\end{theorem}
	
	\begin{proof}
		Assume that $d_v\notin\{0,1,|V(S)|-1\}$ for all $v\in V(S)$, but $S=(S_1,K_1,I_1)\circ (S_2,K_2,I_2)$. We will show that $S-E(K)$ contains both a 4-cycle and a 6-cycle, contradicting the hypothesis. If $I_2 = \varnothing$, then $V(S_2) = K_2 \neq \varnothing$, and $N_w = K-w$ for all $w \in K_2$; hence $(K-w, I\cup w)$ would be a bipartition for $S$ different from $(K, I)$, contradicting that $S$ is balanced. Therefore, $I_2 \neq \varnothing$ (alternatively: from \Cref{S_1S_2_balanced_iff_S_2_balanced} we know that $S_2$ is balanced, which implies $I_2\neq\varnothing$ by \Cref{balanced_implies_|U|<|K|-1}(1)). If $I_1=\varnothing$, then we would have $d_v=|V(S)|-1$ for all $v\in K_1$, contradicting the hypothesis. Hence, it must be $I_1\neq\varnothing$. By hypothesis, there is a vertex $a\in I_1$ with $d_a\geq 2$. Therefore, $S-E(K)$ contains a 4-cycle, by \Cref{lemma_S-E(K)_4cycle}.
		
		It remains to exhibit a 6-cycle. Since $N_a\subseteq K_1$, there exist distinct $x,y\in N_a\cap K_1$. By \Cref{S_1S_2_balanced_iff_S_2_balanced}, $S_2$ is balanced, so $|I_2|\geq 2$ by \Cref{balanced_implies_|U|<|K|-1}(1); let $b,c\in I_2$ be distinct. If $|K_1|=2$, then every vertex of $I_1$ would be adjacent to all of $K_1$ (recall that $d_v\geq 2$ and $N_v\subseteq K_1$ for every $v\in I_1$), so every $w\in K_1$ would satisfy $N_w=V(S)-w$, i.e., $d_w=|V(S)|-1$, contradicting the hypothesis. Hence, $|K_1|\geq 3$, and we can pick $z\in K_1-\{x,y\}$. Since $b$ and $c$ are adjacent to every vertex of $K_1$ by the definition of $\circ$, $axbzcya$ is a 6-cycle in $S$. As each of its edges joins a vertex of $K$ to a vertex of $I_1\cup I_2$, none of them belongs to $E(K)$. Hence, $axbzcya$ is a 6-cycle in $S-E(K)$.
	\end{proof}
	
	The pattern in \Cref{indecomposable_test_split} does not extend beyond 6-cycles. To see this, let $S_1$ be the split graph with bipartition $(K_1,I_1)=(\{k_1,k_2,k_3\},\{a_1,a_2,a_3\})$, where $N_{S_1}(a_i)=K_1-k_i$ for each $i$. Let $S_2\approx P_4$, and consider $S=S_1\circ S_2$. Then, $S$ is a decomposable split graph of order 10, balanced by \Cref{S_1S_2_balanced_iff_S_2_balanced}, and satisfying that $\deg_S(v)\notin\{0,1,|V(S)|-1\}$ for all $v\in V(S)$. However, $S-E(K)$ contains no cycles of length greater than 6. Indeed, writing $S_2=v_1v_2v_3v_4$, we have $K_2=\{v_2,v_3\}$, and each of $v_2$, $v_3$ has exactly one neighbor in $S-E(K)$ (namely, $v_1$ and $v_4$, respectively). Hence, no cycle of $S-E(K)$ passes through $v_2$ or $v_3$, so every cycle of $S-E(K)$ lies in the bipartite subgraph with parts $K_1$ and $I_1\cup\{v_1,v_4\}$. Since $|K_1|=3$, such a cycle has length at most 6. Therefore, no analogue of \Cref{indecomposable_test_split} holds for 8-cycles or longer even cycles.
	
%
	
	We say that a graph $G$ is \emph{prime} if $G$ is active and indecomposable (in \cite{barrus.west.A4} a different, although related, family of graphs was called prime, but in this context we believe that the name fits). Note that the only indecomposable non-active graph is $K_1$. Then, prime graphs are precisely nontrivial indecomposable graphs. The next result is a useful ``primality test'' for a large class of active split graphs.
	
	\begin{corollary}
		\label{prime_test_split}
		Let $(S,K,I)$ be an active split graph without leaves. If $S-E(K)$ does not contain 4-cycles, then $S$ is prime.
	\end{corollary}
	
	\begin{proof}
		From item (7) of \Cref{split.caract.vert.activos}, we know that $S$ is balanced. Moreover, $S$ does not have isolated or universal vertices because they are inactive. Now, use \Cref{indecomposable_test_split}.
	\end{proof}

	
	\section{Active space} \label{sec:espacio.activo}
	
	The \emph{active part} of $G$, denoted by $G^*$, is the subgraph of $G$ induced by $\act(G)$. By definition, $G^*$ is active; in particular, when $\deg(G)=0$ we have $\act(G)=\varnothing$, so $G^*=K_0$, which is why $K_0$ is taken to be active. Thus, $G$ is active if and only if $G^*=G$. Since every 2-switch of $G$ acts only on active vertices, $\deg(G^*)=\deg(G)$. If $d=d(G)$, we refer to the degree sequence of $G^*$ as $d^*$.
	
	\begin{lemma}
		\label{tau(G-S)=tau(G)-S}
		Let $G$ be a graph and let $V_{\theta}$ be the set of all vertices activated by some 2-switch in the sequence $\theta =(\tau_i )_{i=1}^k$ transforming $G$. If $W\subseteq V(G)$ and $W\cap V_{\theta}=\varnothing$, then
		\[ \theta(G-W)=\theta(G)-W. \]
	\end{lemma}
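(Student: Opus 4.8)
The plan is to reduce the statement to its single-switch instance and then run a straightforward induction on the length $k$ of $\theta=(\tau_i)_{i=1}^k$. Concretely, I would first prove the following special case: for any graph $H$, any $2$-switch $\tau=\binom{a\ b}{c\ d}$, and any $W\subseteq V(H)$ disjoint from the set $V_\tau$ of vertices that $\tau$ activates in $H$, one has $\tau(H-W)=\tau(H)-W$. Recalling that $\theta(G)=\tau_k\cdots\tau_1(G)$, the full lemma then follows by applying this special case at each step of the sequence.

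To prove the single-switch case I would split on whether $\tau$ acts on $H$. If $\tau$ is inactive in $H$, then $V_\tau=\varnothing$ and $\tau(H)=H$, so it suffices to check that $\tau$ is still inactive in $H-W$; this rests on the observation that deleting vertices can only remove edges. If some entry of the action matrix lies in $W$ then $\tau$ cannot act in $H-W$ for lack of a vertex, while if $a,b,c,d\notin W$ then $\langle a,b,c,d\rangle_H$ is unchanged by the deletion, so $\tau$'s status is preserved; either way $\tau(H-W)=H-W=\tau(H)-W$. If instead $\tau$ is active in $H$, then $V_\tau=\{a,b,c,d\}$, and the hypothesis $W\cap V_\tau=\varnothing$ forces $a,b,c,d\notin W$, so by the same induced-subgraph remark $\tau$ also acts in $H-W$. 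The conclusion is then a routine comparison of edge sets: both graphs have vertex set $V(H)\setminus W$, and since each of the four edges $ab,cd,ac,bd$ has both endpoints in $\{a,b,c,d\}$ and hence avoids $W$, the operations ``delete $W$'' and ``apply $\tau$'' commute, producing the same edge set in each order.

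For the inductive step I would write $\theta'=(\tau_i)_{i=1}^{k-1}$, so that $\theta(G)=\tau_k(\theta'(G))$, and observe that $V_\theta=V_{\theta'}\cup V'$, where $V'$ is the set of vertices that $\tau_k$ activates in the graph $\theta'(G)$. Thus $W$ is disjoint from both $V_{\theta'}$ and $V'$. The induction hypothesis gives $\theta'(G-W)=\theta'(G)-W$, and the single-switch case, applied to the graph $\theta'(G)$, the switch $\tau_k$, and the set $W$, gives $\tau_k(\theta'(G)-W)=\tau_k(\theta'(G))-W$. Chaining these identities yields $\theta(G-W)=\tau_k(\theta'(G-W))=\tau_k(\theta'(G)-W)=\theta(G)-W$, as desired.

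The main obstacle is purely a matter of bookkeeping rather than of a hard combinatorial idea: one must be careful that activation is always measured relative to the graph a switch is applied to, so that along the sequence $V_\theta$ decomposes as the union of the vertices activated by each $\tau_i$ in the intermediate graph $\tau_{i-1}\cdots\tau_1(G)$, and that the global hypothesis $W\cap V_\theta=\varnothing$ really does supply the local disjointness needed at every step of the induction. Once this tracking is set up correctly, every individual verification reduces to the elementary edge-set manipulations above.
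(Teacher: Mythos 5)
Your proof is correct and follows essentially the same route as the paper's: establish the single-switch case $\tau(H-W)=\tau(H)-W$ by observing that the edges involved avoid $W$, then induct on $k$ by applying this case to the intermediate graph $\tau_{k-1}\cdots\tau_1(G)$. Your treatment is in fact slightly more careful than the paper's, since you spell out the inactive case and the decomposition of $V_\theta$ across intermediate graphs, both of which the paper leaves implicit as ``clear.''
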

	
	\begin{proof}
		Let $k=1$, that is, $\theta$ is a single 2-switch ${{a \ b}\choose{c \ d}}$ on $G$. Since $W\cap V_{\theta}=\varnothing$, it is clear that $\theta$ is also a 2-switch on $G-W$. Then,
		\[ \theta(G-W)=((G-W)-\{ab,cd\})+\{ac,bd\}. \]
		Since $V_{\theta}=\{a,b,c,d\}$ and $W\cap V_{\theta}=\varnothing$, it follows that
		\[ (G-W)-\{ab,cd\}=(G-\{ab,cd\})-W. \]
		For the same reason, it is easy to see that
		\[ ((G-\{ab,cd\})-W)+\{ac,bd\}= \]
		\[ ((G-\{ab,cd\})+\{ac,bd\})-W=\theta(G)-W. \]
		The rest of the proof is carried out easily by induction on $k$, since it suffices to recycle the case $k=1$ by replacing $G$ with $\tau_1\ldots\tau_{k-1}(G)$ and $\theta$ with $\tau_k$.
	\end{proof}
	
	Let $X=(V,E)$ be a graph. For any vertex set $V'$ disjoint from $V$ and any 
	\[ E'\subseteq\binom{V\cup V'}{2}-\binom{V}{2}=\binom{V'}{2}\dot{\cup}\{xy:x\in V,y\in V'\}, \]
	we define the $(V',E')$\emph{-extension} of $X$ as the graph 
	\[ \ext(X,V',E')=(V\cup V',E\cup E'). \]
	Clearly, $X\preceq \ext(X,V',E')$. 
	
	\begin{lemma}
		\label{ext(theta)=theta(ext)}
		Let $G=(V,E)$ be a graph and let $\theta$ be a sequence of 2-switches transforming $G$. If $W=V-\act(G)$ and $E'=\{xy\in E:x\in W\}$, then
		\[ \ext(\theta(G^*),W,E')=\theta(\ext(G^*,W,E')). \]
	\end{lemma}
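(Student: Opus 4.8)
The plan is to first reduce the identity to a statement about $\theta(G)$ alone. I would observe that $\ext(G^*,W,E')=G$: the vertex set is $\act(G)\,\dot{\cup}\,W=V$, while $E(G^*)\cup E'$ consists of the edges with both endpoints active together with the edges having an endpoint in $W$, and since every vertex of $G$ lies in $\act(G)$ or in $W$, this union is all of $E$. Hence the right-hand side equals $\theta(G)$, and it suffices to prove
\[ \ext(\theta(G^*),W,E')=\theta(G). \]

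The heart of the argument is to show that no $2$-switch of $\theta$ ever touches a vertex of $W$; that is, $V_\theta\cap W=\varnothing$, where $V_\theta$ denotes the set of all vertices activated by a $2$-switch of $\theta$. To see this, let $\tau_i$ be any $2$-switch of $\theta$ that is active in the intermediate graph $G_{i-1}=\tau_{i-1}\ldots\tau_1(G)$. The four vertices it activates induce a subgraph isomorphic to $P_4$, $C_4$, or $2K_2$ in $G_{i-1}$, so each of them lies in an element of $Q_{G_{i-1}}^*$ and is therefore active in $G_{i-1}$. Since every intermediate graph shares the degree sequence of $G$, \Cref{2switch.preservs.act} gives $\act(G_{i-1})=\act(G)$, whence each activated vertex belongs to $\act(G)=V-W$. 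Taking the union over all $2$-switches of $\theta$ yields $V_\theta\subseteq\act(G)$, so $V_\theta\cap W=\varnothing$.

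With this in hand, I would apply \Cref{tau(G-S)=tau(G)-S} to the set $W$: since $W\cap V_\theta=\varnothing$ and $G-W=G^*$, we obtain $\theta(G^*)=\theta(G)-W$. Moreover, because no $2$-switch of $\theta$ activates a vertex of $W$, each such $2$-switch only alters edges among its four activated vertices, all lying in $\act(G)$; consequently the edges of $\theta(G)$ incident to $W$ coincide with those of $G$, namely $E'$. Therefore re-extending $\theta(G)-W$ by the vertices $W$ and the edge set $E'$ recovers exactly $\theta(G)$:
\[ \ext(\theta(G^*),W,E')=\ext(\theta(G)-W,W,E')=\theta(G), \]
as required.

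I expect the main obstacle to be the second paragraph, namely rigorously pinning down that $V_\theta\subseteq\act(G)$. This combines two distinct facts — that an active $2$-switch activates only vertices already active in the graph it acts on, and that the active set is invariant along the entire sequence $\theta$ — and it is precisely here that the preservation results of the previous section do the real work. Once $V_\theta\cap W=\varnothing$ is secured, the remaining deletion–extension bookkeeping is routine and follows from \Cref{tau(G-S)=tau(G)-S}.
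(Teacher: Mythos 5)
Your proof is correct and follows essentially the same route as the paper's: reduce the right-hand side to $\theta(G)$ via $\ext(G^*,W,E')=G$, show that no 2-switch in $\theta$ touches $W$ (using \Cref{2switch.preservs.act} to propagate activity along the intermediate graphs), and then combine \Cref{tau(G-S)=tau(G)-S} with the observation that $E'=\{xy\in E(\theta(G)):x\in W\}$. Your second paragraph merely spells out in full the step the paper states in one line, which is a welcome clarification rather than a deviation.
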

	
	\begin{proof}
		The first thing to note is that $G^*=G-W$. Hence, it is clear that $\ext(G^*,W,E')=G$. Secondly, since each 2-switch in $\theta$ only deletes and adds edges of the form $xy$, with $x,y\in \act(G)$, it follows that
		\[ E'=\{xy\in E(\theta(G)):x\in W\} \]
		(remember that $\act(G)$ is an invariant associated with $d(G)$, by \Cref{2switch.preservs.act}). With this in mind, and applying \Cref{tau(G-S)=tau(G)-S}, we obtain
		\[ \ext(\theta(G^*),W,E')=\ext(\theta(G)-W,W,E')= \]
		\[ (V,E(\theta(G)-W)\dot{\cup}E'). \]
		Since
		\[ E(\theta(G)-W)=E(\theta(G))-\{xy\in E(\theta(G)): x\in W\}, \]
		we conclude that $\ext(\theta(G^*),W,E')=\theta(G)$.
	\end{proof}
	
	If $\mathcal{X}\preceq\mathcal{G}(d)$, define
	\[ V(\mathcal{X})^* =\{G^* :G\in V(\mathcal{X})\}. \]
	Let $\psi :V(\mathcal{X})\rightarrow V(\mathcal{X})^*$ be the function defined by $\psi(X)= X^*$. Choose arbitrarily a vertex $X_0$ of $\mathcal{G}(d)$ and define
	\[ E_0=\{xy\in E(X_0):x\in W\}, \]
	where $W=V(X_0)-\act(d)$. Thanks to \Cref{2switch.preservs.act}, it is easy to verify that $E_0$ is an invariant associated with $d$. In other words, replacing $X_0$ with any other vertex of $\mathcal{G}(d)$, the set $E_0$ does not change. Now suppose that $G^* =H^*$. Then,
	\[ G=\ext(G^*,W,E_0)=\ext(H^*,W,E_0)=H, \]
	which shows that $\psi$ is injective. Since $V(\mathcal{X})^*=\psi(V(\mathcal{X}))$, $\psi$ is obviously surjective. Therefore, $\psi$ is a bijection. 
	
	Let $G^*,H^*\in V(\mathcal{X})^*$, $G^*\neq H^*$. According to \Cref{berge's.theorem}, there exists a sequence $\theta$ of 2-switches such that $H=\theta(G)$, because $G,H\in V(\mathcal{G}(d))$ and $G\neq H$. Then, applying \Cref{2switch.preservs.act} and \Cref{tau(G-S)=tau(G)-S} for $W=V(G)-\act(d)$, we deduce that 
	\[ \theta(G^*)=\theta(G-W)=\theta(G)-W=H-W=H^*, \]
	which means that $d(G^*)=d^*=d(H^*)$. In other words, $V(\mathcal{X})^*\subseteq V(\mathcal{G}(d^*))$. We define the \emph{active space} $\mathcal{X}^*$ associated with $\mathcal{X}\preceq\mathcal{G}(d)$ as the subgraph of $\mathcal{G}(d^*)$ induced by $V(\mathcal{X})^*$. Thanks to \Cref{tau(G-S)=tau(G)-S} and \Cref{2switch.preservs.act}, it is clear that $\psi$ is a homomorphism between $\mathcal{X}$ and $\mathcal{X}^*$. On the other hand, observe that the function $\zeta: V(\mathcal{X}^*)\rightarrow V(\mathcal{X})$, defined by $\zeta(Y)=\ext(Y,W,E_0)$, is the inverse of $\psi$, and it is also a homomorphism by \Cref{ext(theta)=theta(ext)}. Then, $\psi$ is an isomorphism. 
	
	\begin{theorem}
		\label{isomorfismo.espacio.activo}
		If $\mathcal{X}\preceq\mathcal{G}(d)$, then $\mathcal{X}\approx\mathcal{X}^*$.
	\end{theorem}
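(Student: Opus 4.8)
The plan is to show that the map $\psi\colon V(\mathcal{X})\to V(\mathcal{X}^*)$ given by $\psi(X)=X^*$ is a graph isomorphism, which amounts to checking that it is a bijection and that both $\psi$ and its inverse preserve adjacency. Essentially all of the technical content has already been assembled in the discussion preceding the statement, so the proof is a matter of recording how those pieces fit together. The crucial structural input is that, by \Cref{2switch.preservs.act}, the inactive vertex set $W=V(X_0)-\act(s)$ and the edge set $E_0=\{xy\in E(X_0):x\in W\}$ are invariants of the degree sequence $s$: no active 2-switch touches an edge incident to an inactive vertex, so the ``inactive scaffold'' $(W,E_0)$ is common to every realization in $\mathcal{G}(s)$.

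First I would establish that $\psi$ is a bijection. Surjectivity is immediate, since $V(\mathcal{X}^*)=\psi(V(\mathcal{X}))$ by definition. For injectivity, the point is that every realization is recovered from its active part by gluing back the fixed scaffold, $X=\ext(X^*,W,E_0)$; hence $G^*=H^*$ forces $G=\ext(G^*,W,E_0)=\ext(H^*,W,E_0)=H$. Equivalently, the map $\zeta\colon V(\mathcal{X}^*)\to V(\mathcal{X})$ defined by $\zeta(Y)=\ext(Y,W,E_0)$ is a two-sided inverse of $\psi$.

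Next I would verify that $\psi$ and $\zeta$ are both homomorphisms, which is exactly where the two extension lemmas enter. If $\{G,H\}$ is an edge of $\mathcal{X}$, write $H=\tau(G)$ for an active 2-switch $\tau$; since $\tau$ activates only active vertices, its activated set $V_\tau$ satisfies $W\cap V_\tau=\varnothing$, so \Cref{tau(G-S)=tau(G)-S} gives $\tau(G^*)=\tau(G-W)=\tau(G)-W=H^*$, making $\{G^*,H^*\}$ an edge of $\mathcal{X}^*$ (the endpoints are distinct by injectivity); thus $\psi$ is a homomorphism. Conversely, if $\{Y,Z\}$ is an edge of $\mathcal{X}^*$ with $Z=\tau(Y)$, then \Cref{ext(theta)=theta(ext)} yields $\tau(\zeta(Y))=\tau(\ext(Y,W,E_0))=\ext(\tau(Y),W,E_0)=\zeta(Z)$, so $\zeta$ is a homomorphism as well. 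A bijection that is a homomorphism in both directions is an isomorphism, and the result follows.

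The main obstacle, and the conceptual heart of the argument, is the well-definedness of the scaffold $(W,E_0)$ independently of the base graph $X_0$. This is not routine bookkeeping: it rests on \Cref{2switch.preservs.act}, which guarantees that $\act(s)$ (and hence $W$) is the same for every member of $\mathcal{G}(s)$, together with the observation that an active 2-switch only ever deletes or inserts edges between active vertices. Once this invariance is in hand, both the inverse map $\zeta$ and the adjacency-preservation of $\psi$ and $\zeta$ follow mechanically from \Cref{tau(G-S)=tau(G)-S} and \Cref{ext(theta)=theta(ext)}.
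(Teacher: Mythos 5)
Your proposal is correct and follows essentially the same route as the paper, whose proof is literally the preceding discussion you have reassembled: the bijection $\psi(X)=X^*$ with inverse $\zeta(Y)=\ext(Y,W,E_0)$, the invariance of the scaffold $(W,E_0)$ via \Cref{2switch.preservs.act}, and adjacency preservation in the two directions via \Cref{tau(G-S)=tau(G)-S} and \Cref{ext(theta)=theta(ext)}. Your explicit remark that the endpoints $G^*,H^*$ are distinct by injectivity is a small point the paper leaves implicit, but the argument is otherwise identical.
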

	
	\begin{proof} 
		It follows from the previous discussion.
	\end{proof}
	
	\begin{corollary}
		\label{G(d)_iso_G(d*)}
		If $d$ is a graphical sequence, then $\mathcal{G}(d)\approx\mathcal{G}(d^*)$.
	\end{corollary}
	
	\begin{proof}
		By \Cref{isomorfismo.espacio.activo} with $\mathcal{X}=\mathcal{G}(d)$, we have $\mathcal{G}(d)\approx\mathcal{G}(d)^*$, the latter being the subgraph of $\mathcal{G}(d^*)$ induced by $V(\mathcal{G}(d))^*$. Hence, it suffices to show that $V(\mathcal{G}(d))^*=V(\mathcal{G}(d^*))$. The inclusion $\subseteq$ was already established; for $\supseteq$, let $Y\in V(\mathcal{G}(d^*))$. Since $d(Y)=d^*=d(X_0^*)$, \Cref{berge's.theorem} gives a sequence $\theta$ of 2-switches with $Y=\theta(X_0^*)$. Each 2-switch of $\theta$ acts on four vertices of $\act(d)$, so $W\cap V_\theta=\varnothing$; since $X_0^*=X_0-W$ is an induced subgraph of $X_0$, the sequence $\theta$ is also applicable to $X_0$ (by iterating \Cref{tau(G-S)=tau(G)-S}). Thus $G=\theta(X_0)\in V(\mathcal{G}(d))$ and, by \Cref{tau(G-S)=tau(G)-S},
		\[ G^*=\theta(X_0)-W=\theta(X_0-W)=\theta(X_0^*)=Y. \]
		Therefore, $Y\in V(\mathcal{G}(d))^*$.
	\end{proof}
	
	
	\section{Basic properties of the degree} \label{sec:prop.basicas.deg}
	
	At this point, it should come as no surprise that the number of 2-switches on a graph $G$ can be obtained by summing the degree of all induced subgraphs of order 4 in $G$. In other words,
	\begin{equation*}
		\deg(G)=\sum_{H\in Q_G}\deg(H),
	\end{equation*}
	where $\deg(H)=2$ if $H\approx 2K_2$ or $C_4$, $\deg(H)=1$ if $H\approx P_4$, and $\deg(H)=0$ in the remaining cases. If $X$ is one of the 11 graphs in \Cref{los.11.de.orden4}, we define
	\[ Q_G (X)=\{H\in Q_G :H\approx X\}. \]
	Then, we have the following result.
	
	\begin{theorem}
		\label{degreeofG}
		For every graph $G$,
		\begin{equation}
			\label{eq18}
			\deg(G)=2|Q_G (2K_2 )|+2|Q_G (C_4 )|+|Q_G (P_4 )|.
		\end{equation}
		In particular, if $G$ is a split graph, then $\deg(G)=|Q_G (P_4 )|$.
	\end{theorem}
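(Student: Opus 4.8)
The plan is to count the active 2-switches of $G$ directly, since by definition $\deg(G)$ equals this number, and to organise the count according to the four vertices on which each 2-switch acts. If $\tau=\binom{a\ b}{c\ d}$ is active in $G$, its four entries are distinct and, as observed right after \Cref{berge's.theorem}, the induced subgraph $\langle a,b,c,d\rangle_G$ is isomorphic to $P_4$, $C_4$, or $2K_2$; in particular it lies in $Q_G^*$. Thus each active 2-switch has a well-defined \emph{entry-set} $V(H)$ with $H\in Q_G^*$, and I would partition the active 2-switches of $G$ into blocks indexed by this entry-set. Because each 2-switch determines a single $4$-element entry-set, this is a genuine partition: every active 2-switch lands in exactly one block.

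The second step is the observation that activeness is \emph{local} to the four entries. The conditions defining an active 2-switch in \eqref{2switch.def}, namely $\{ab,cd\}\subseteq E(G)$ and $\{ac,bd\}\subseteq E(\overline{G})$, only constrain pairs of vertices inside $\{a,b,c,d\}$; and since $H=\langle a,b,c,d\rangle_G$ is induced, each such pair is an edge (or non-edge) of $G$ precisely when it is an edge (or non-edge) of $H$. Hence a 2-switch with entry-set $V(H)$ is active in $G$ if and only if it is active in $H$, so the block indexed by $H$ has exactly $\deg(H)$ elements. Summing over all blocks gives
\[ \deg(G)=\sum_{H\in Q_G}\deg(H), \]
in which only the members of $Q_G^*$ contribute.

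It then remains to compute $\deg(H)$ for each $4$-vertex graph. By the description of $Q_G^*$ derived from \Cref{los.11.de.orden4}, $\deg(H)=0$ unless $H\approx 2K_2,C_4,$ or $P_4$, so a short case check handles the three remaining types. For $H\approx 2K_2$ with edges $ab,cd$, both re-pairings $\binom{a\ b}{c\ d}$ and $\binom{a\ b}{d\ c}$ are active, giving $\deg(2K_2)=2$. For $H\approx C_4$, the admissible moves remove a pair of opposite edges and insert the two diagonals; there are two choices of opposite pair, so $\deg(C_4)=2$. For $H\approx P_4$ with edges $ab,bc,cd$, the only disjoint edge pair is $\{ab,cd\}$, and of its two re-pairings only the one adding $\{ac,bd\}$ is active (the other is blocked by the middle edge $bc$), so $\deg(P_4)=1$. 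Substituting these values and grouping the sum by isomorphism type yields \eqref{eq18}. For the split-graph case I would invoke the forbidden-subgraph characterisation of split graphs recalled in the introduction: such graphs contain no induced $C_4$ and no induced $2K_2$, whence $|Q_G(2K_2)|=|Q_G(C_4)|=0$ and \eqref{eq18} reduces to $\deg(G)=|Q_G(P_4)|$.

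The only delicate point I anticipate is the bookkeeping in the first two paragraphs: one must be sure that the entry-set partition is honest (no active 2-switch is missed or double-counted) and that activeness genuinely depends only on the induced subgraph on the entries. Once these are secured, the computation of $\deg(2K_2)$, $\deg(C_4)$, $\deg(P_4)$ and the split-graph specialisation are entirely routine.
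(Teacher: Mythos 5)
Your proof is correct and takes essentially the same route as the paper: the paper's own argument is exactly the decomposition $\deg(G)=\sum_{H\in Q_G}\deg(H)$ with the local values $\deg(2K_2)=\deg(C_4)=2$, $\deg(P_4)=1$, $\deg(H)=0$ otherwise, followed by the forbidden-subgraph characterization of split graphs for the second claim. The only difference is one of polish: you spell out the entry-set partition and the fact that activeness depends only on the induced subgraph on the four entries, details the paper compresses into ``follows from the previous discussion.''
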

	
	\begin{proof}
		Equality \eqref{eq18} follows from the previous discussion. When $G$ is split, we know it does not contain induced subgraphs isomorphic to $C_4$ or $2K_2$. Therefore, $|Q_G (2K_2 )|=|Q_G (C_4 )|=0$.
	\end{proof}
	
	From \eqref{eq18} we observe that
	\[ \deg(G)\equiv|Q_G(P_4)|\pmod{2}, \]
	that is, the degree of $G$ has the same parity as the number of induced $P_4$'s in $G$. In particular, every graph of odd degree contains at least one induced $P_4$.
	
	\begin{corollary}
		\label{degG=deg(G.complemento)}
		For any graph $G$,
		\begin{equation*}
			\deg(G)=\deg(\overline{G}).
		\end{equation*}
	\end{corollary}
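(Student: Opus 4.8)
The plan is to give two independent routes, leading with the slicker one. The quickest proof invokes \Cref{dual.spaces.iso}: the map $\varphi(X)=\overline{X}$ is a graph isomorphism from $\mathcal{G}(s)$ onto $\mathcal{G}(\overline{s})$, and it sends the vertex $G$ to the vertex $\overline{G}$. Since $\deg(G)$ is by definition the degree of $G$ as a vertex of $\mathcal{G}(s)$, and graph isomorphisms preserve vertex degrees, I would conclude at once that $\deg(G)=\deg_{\mathcal{G}(s)}(G)=\deg_{\mathcal{G}(\overline{s})}(\overline{G})=\deg(\overline{G})$. This is a one-line argument once the dual-space isomorphism is in hand.

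Alternatively, and more in the spirit of the formula just established, I would argue directly from \eqref{eq18}. The key combinatorial input is that for any $4$-subset $W\subseteq V(G)$ one has $\langle W\rangle_{\overline{G}}=\overline{\langle W\rangle_G}$, so complementation on $G$ induces, vertex-set by vertex-set, a bijection between $Q_G$ and $Q_{\overline{G}}$ that replaces each induced subgraph $H$ by its complement $\overline{H}$. Inspecting \Cref{los.11.de.orden4}, the three relevant classes behave as follows: $\overline{2K_2}\approx C_4$, $\overline{C_4}\approx 2K_2$, and $\overline{P_4}\approx P_4$ (so $P_4$ is self-complementary). Hence the bijection restricts to bijections $Q_G(2K_2)\leftrightarrow Q_{\overline{G}}(C_4)$, $Q_G(C_4)\leftrightarrow Q_{\overline{G}}(2K_2)$, and $Q_G(P_4)\leftrightarrow Q_{\overline{G}}(P_4)$, giving $|Q_G(2K_2)|=|Q_{\overline{G}}(C_4)|$, $|Q_G(C_4)|=|Q_{\overline{G}}(2K_2)|$ and $|Q_G(P_4)|=|Q_{\overline{G}}(P_4)|$.

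Substituting into \eqref{eq18} applied to $\overline{G}$ gives $\deg(\overline{G})=2|Q_{\overline{G}}(2K_2)|+2|Q_{\overline{G}}(C_4)|+|Q_{\overline{G}}(P_4)|=2|Q_G(C_4)|+2|Q_G(2K_2)|+|Q_G(P_4)|=\deg(G)$. The swap between the $2K_2$-count and the $C_4$-count is harmless precisely because both carry the coefficient $2$ in \eqref{eq18}, while the $P_4$-count is fixed by complementation. The argument is essentially bookkeeping, so I do not expect a genuine obstacle; the only point demanding care is confirming the complementary pairing of the order-$4$ graphs and checking that the matching coefficients in \eqref{eq18} line up, which is exactly what forces the $C_4$ and $2K_2$ terms to share the same weight.
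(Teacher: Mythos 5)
Your first argument is exactly the paper's proof: invoke \Cref{dual.spaces.iso} and observe that the isomorphism $X\mapsto\overline{X}$ carries the vertex $G$ of $\mathcal{G}(s)$ to the vertex $\overline{G}$ of $\mathcal{G}(\overline{s})$, so the degrees agree. Your second route is correct as well, and is precisely the alternative the paper itself sketches in the remark following the corollary (complementation bijects $Q_G$ with $Q_{\overline{G}}$, with $\overline{P_4}\approx P_4$ and $\overline{C_4}\approx 2K_2$ swapping the two weight-$2$ terms in \eqref{eq18}), so both of your arguments match the paper's own.
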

	
	\begin{proof}
		If $G\in V(\mathcal{G}(d))$, we know that $\overline{G}\in V(\mathcal{G}(\overline{d}))$. By \Cref{dual.spaces.iso}, we have $\mathcal{G}(d)\approx\mathcal{G}(\overline{d})$ via the isomorphism $G\mapsto\overline{G}$, and therefore $\deg(G)=\deg(\overline{G})$.
	\end{proof}
	
	It is worth mentioning that \Cref{degG=deg(G.complemento)} can also be proven using the fact that $H\preceq G$ if and only if $\overline{H}\preceq\overline{G}$. This, together with the fact that $\overline{P_4}=P_4$ and $\overline{C_4}=2K_2$, implies that $|Q_G(P_4)|=|Q_{\overline{G}}(P_4)|$, $|Q_G(C_4)|=|Q_{\overline{G}}(2K_2)|$, and $|Q_G(2K_2)|=|Q_{\overline{G}}(C_4)|$.
	
	\begin{proposition}
		\label{deg.respects.ind.inc}
		If $H\preceq G$, then 
		\begin{equation*}
			\deg(H)\leq\deg(G).
		\end{equation*}
	\end{proposition}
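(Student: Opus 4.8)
The plan is to reduce the inequality to the explicit counting formula of \Cref{degreeofG} and to exploit the transitivity of the induced-subgraph relation. Since both $\deg(H)$ and $\deg(G)$ are expressed, via \eqref{eq18}, as the same nonnegative linear combination of the quantities $|Q_{(\cdot)}(2K_2)|$, $|Q_{(\cdot)}(C_4)|$ and $|Q_{(\cdot)}(P_4)|$, it suffices to compare these three counts term by term.

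First I would observe that if $H\preceq G$ and $W\subseteq V(H)$ with $|W|=4$, then $\langle W\rangle_H=\langle W\rangle_G$. This is immediate from the definition of induced subgraph: the two induced subgraphs share the vertex set $W$, and an edge between two vertices of $W$ lies in $\langle W\rangle_H$ if and only if it lies in $H$, which (since $H$ is \emph{induced} in $G$) happens if and only if it lies in $G$, i.e.\ in $\langle W\rangle_G$. In other words, $\preceq$ is transitive, so every member of $Q_H$ is also a member of $Q_G$ and carries the very same isomorphism type in both. Consequently $Q_H(X)\subseteq Q_G(X)$ for each of the eleven order-4 types $X$ of \Cref{los.11.de.orden4}, whence $|Q_H(X)|\leq|Q_G(X)|$.

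Applying this with $X\in\{2K_2,C_4,P_4\}$ and substituting into \eqref{eq18} yields
\[ \deg(H)=2|Q_H(2K_2)|+2|Q_H(C_4)|+|Q_H(P_4)|\leq 2|Q_G(2K_2)|+2|Q_G(C_4)|+|Q_G(P_4)|=\deg(G), \]
which is the desired conclusion. There is no serious obstacle here: the whole argument rests on the transitivity of $\preceq$ together with the already-proven decomposition of the degree in \Cref{degreeofG}. The only point deserving a line of care is the verification that passing from $H$ to $G$ does not alter the induced structure on a fixed $4$-set $W$, so that the partition of $Q_H$ into the classes $Q_H(X)$ is genuinely inherited by $Q_G$; once this is noted, the inequality follows by a termwise comparison of nonnegative integers.
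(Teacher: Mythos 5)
Your proof is correct and is essentially the paper's own argument spelled out in detail: the paper's entire proof is the observation that $Q_H^*\subseteq Q_G^*$, which is exactly the inclusion $Q_H(X)\subseteq Q_G(X)$ for $X\in\{2K_2,C_4,P_4\}$ that you derive from transitivity of $\preceq$ before substituting into \eqref{eq18}. Your write-up merely makes explicit the termwise comparison that the paper leaves implicit, so there is nothing to correct.
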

	
	\begin{proof}
		This is true because $Q^*_H \subseteq Q^*_G$.
	\end{proof}
	
	\Cref{deg.respects.ind.inc} is not generally true if the subgraph is not induced. For example, $P_4 \subseteq K_4$ but $\deg(P_4)=1$ and $\deg(K_4)=0$. \Cref{deg.respects.ind.inc} can be easily generalized as follows.
	
	\begin{proposition}
		\label{collH.ind.sub.G.degG>=sum.degH}
		Let $\{ H_i :i\in[k]\}$ be a collection of induced subgraphs of $G$ such that $|E(H_i )\cap E(H_j )|\leq 1$ for $i\neq j$. Then, 
		\begin{equation*}
			\sum_{i=1}^{k}\deg(H_i )\leq\deg(G).
		\end{equation*}
	\end{proposition}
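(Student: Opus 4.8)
The plan is to reduce everything to the local formula $\deg(F)=\sum_{F\in Q_F}\deg(F)$ underlying \Cref{degreeofG} and then control the overlaps between the different $H_i$ using the hypothesis on shared edges. First I would record, exactly as in the proof of \Cref{deg.respects.ind.inc}, that since each $H_i\preceq G$ is an \emph{induced} subgraph, every order-$4$ induced subgraph of $H_i$ is also an order-$4$ induced subgraph of $G$ with the same edge set; hence $Q_{H_i}^*\subseteq Q_G^*$ for each $i$, and moreover
\[ \deg(H_i)=\sum_{F\in Q_{H_i}^*}\deg(F), \]
where each term $\deg(F)\in\{1,2\}$ according to whether $F\approx P_4$ or $F\approx C_4,2K_2$.

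The crux of the argument is the claim that the families $Q_{H_i}^*$ are pairwise disjoint. To see this, suppose some $F\in Q_{H_i}^*\cap Q_{H_j}^*$ with $i\neq j$. Because $H_i$ and $H_j$ are induced in $G$, the edge set of $F$ (computed inside $G$) satisfies $E(F)\subseteq E(H_i)$ and $E(F)\subseteq E(H_j)$, so $E(F)\subseteq E(H_i)\cap E(H_j)$. But $F\in Q_G^*$ forces $F\approx P_4,C_4$ or $2K_2$, each of which has at least two edges, giving $|E(F)|\geq 2$. This contradicts the hypothesis $|E(H_i)\cap E(H_j)|\leq 1$. Therefore no active order-$4$ subgraph can be shared, and the sets $Q_{H_i}^*$ are disjoint.

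With disjointness in hand the conclusion is immediate: the union $\bigcup_{i=1}^k Q_{H_i}^*$ is a disjoint union contained in $Q_G^*$, so
\[ \sum_{i=1}^{k}\deg(H_i)=\sum_{i=1}^{k}\sum_{F\in Q_{H_i}^*}\deg(F)=\sum_{F\in\bigcup_i Q_{H_i}^*}\deg(F)\leq\sum_{F\in Q_G^*}\deg(F)=\deg(G), \]
where the last equality is \Cref{degreeofG}. I expect the main obstacle to be purely bookkeeping rather than conceptual: one must be careful that ``induced'' is used at two points — to guarantee $E(F)$ agrees whether computed in $H_i$ or in $G$, and to guarantee $Q_{H_i}^*\subseteq Q_G^*$ — and one must explicitly invoke that every active order-$4$ graph has at least two edges, which is exactly what makes the threshold $|E(H_i)\cap E(H_j)|\leq 1$ sufficient to rule out any shared active subgraph.
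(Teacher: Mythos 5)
Your proof is correct and is essentially the paper's argument in different bookkeeping: the paper notes directly that a 2-switch active in both $H_i$ and $H_j$ would force $|E(H_i)\cap E(H_j)|\geq 2$, which is exactly your observation that every active quadruple has at least two edges, phrased at the level of the sets $Q_{H_i}^*$ instead of individual 2-switches. Your detour through the decomposition $\deg(H)=\sum_{F\in Q_H^*}\deg(F)$ of \Cref{degreeofG} is sound but adds nothing the paper's one-line count of active 2-switches (together with $Q_{H_i}^*\subseteq Q_G^*$ from \Cref{deg.respects.ind.inc}) does not already give.
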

	
	\begin{proof}
		If $\tau$ were a 2-switch on both $H_i$ and $H_j$ for certain distinct $i,j$, then we would have $|E(H_i )\cap E(H_j )|\geq 2$, contradicting the hypothesis. Therefore, each 2-switch on $H_i$ cannot be performed on $H_j$, for every $j\neq i$.
	\end{proof}
	
	\begin{proposition}
		If $G'\preceq G^{*}$ and $\deg(G')=\deg(G)>0$, then $G'=G^{*}$. In other words, $G^{*}$ is the smallest induced subgraph (in terms of order) of $G$ with the same degree as $G$.
	\end{proposition}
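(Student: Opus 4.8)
The plan is to identify $\deg(X)$ with the cardinality of the set $\mathcal{A}(X)$ of active 2-switches of a graph $X$ (those action matrices $\binom{a\ b}{c\ d}$, with entries in $V(X)$, that satisfy the conditions of \eqref{2switch.def}), which is exactly the definition underlying \Cref{degreeofG}. The whole argument then reduces to showing that deleting even a single active vertex strictly shrinks this set, and that no active 2-switch is lost when one restricts attention to $G^*$.

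First I would record that $\deg(G^*)=\deg(G)$. The inequality $\deg(G^*)\le\deg(G)$ is immediate from \Cref{deg.respects.ind.inc}, since $G^*\preceq G$. For the reverse inequality, the key observation is that every active 2-switch $\tau=\binom{a\ b}{c\ d}$ of $G$ uses only active vertices: its four entries $a,b,c,d$ all lie in the induced subgraph $\langle a,b,c,d\rangle_G\in Q_G^*$, so each of them is active in $G$. Hence $\tau$ acts entirely inside $G^*=\langle\act(G)\rangle_G$, and because $G^*$ is an induced subgraph the edge conditions are inherited unchanged, so $\tau$ is active in $G^*$ as well. This yields $\mathcal{A}(G)=\mathcal{A}(G^*)$ and in particular $\deg(G)=\deg(G^*)$.

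Then I would suppose, toward a contradiction, that $G'\preceq G^*$ is a \emph{proper} induced subgraph, so that there is a vertex $v\in V(G^*)\setminus V(G')$. Since $G'\preceq G^*$, every active 2-switch of $G'$ has its four entries in $V(G')\subseteq V(G^*)$ with edge conditions inherited from $G^*$, so $\mathcal{A}(G')\subseteq\mathcal{A}(G^*)$. Because $G^*$ is active by the definition of the active part, the vertex $v$ is active in $G^*$, so there is some $\tau\in\mathcal{A}(G^*)$ having $v$ among the four entries of its action matrix. As $v\notin V(G')$, this $\tau$ cannot be active in $G'$; hence $\tau\in\mathcal{A}(G^*)\setminus\mathcal{A}(G')$ and the inclusion $\mathcal{A}(G')\subseteq\mathcal{A}(G^*)$ is strict. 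Consequently $\deg(G')=|\mathcal{A}(G')|<|\mathcal{A}(G^*)|=\deg(G^*)=\deg(G)$, contradicting $\deg(G')=\deg(G)$. Therefore no active vertex is missing from $G'$, i.e., $G'=G^*$.

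Finally I would note that the very same deletion argument applied to an arbitrary $G'\preceq G$ (not just $G'\preceq G^*$) shows that any induced subgraph with $\deg(G')=\deg(G)>0$ must contain every active vertex, that is $G^*\preceq G'$; this is precisely what justifies describing $G^*$ as the smallest induced subgraph of $G$ realizing the full degree. The one point demanding care will be the identity $\mathcal{A}(G)=\mathcal{A}(G^*)$ together with the claim that passing to an induced subgraph neither creates nor merges active 2-switches. Once it is established that active 2-switches live only on active vertices and are detected locally on their four vertices, everything collapses to a clean strict set inclusion, and the hypothesis $\deg(G)>0$ serves only to guarantee that $G^*$ is nonempty so that the statement is not vacuous.
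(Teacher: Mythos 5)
Your proof is correct and takes essentially the same route as the paper's: given a vertex $v\in V(G^{*})\setminus V(G')$, exhibit a 2-switch activating $v$ that is necessarily inactive in $G'$, and conclude $\deg(G')\leq\deg(G)-1$, a contradiction. Your preliminary identity $\mathcal{A}(G)=\mathcal{A}(G^{*})$ and the locality of activity on the four entries of the action matrix simply make explicit what the paper leaves implicit.
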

	
	\begin{proof}
		Suppose that $G'\preceq G^{*}$ and $\deg(G')=\deg(G)>0$, but $G'\neq G^{*}$. Since $\deg(G)=\deg(G^*)>0$, we have $\act(G^*)\neq\varnothing$. Then, if $v\in V(G^{*})-V(G')$, there is at least one 2-switch $\tau$ on $G^*$ activating $v$. Since $v\notin V(G')$, it is clear that $\tau$ cannot be applied to $G'$. Using \Cref{deg.respects.ind.inc}, we finally obtain
		\[ \deg(G)=\deg(G')<\deg(G^*)=\deg(G), \]
		which is absurd.
	\end{proof}
	
	The next result states that computing the degree of a graph essentially reduces to computing the degree of its components.
	
	\begin{proposition}
		\label{degree.disconn.G}
		Let $G$ be a graph with $k\geq 2$ connected components. If $G_{i}$ is a component of $G$, then 
		\begin{equation}
			\deg(G)=\sum_{i=1}^{k}\deg(G_{i}) +\sum_{1\leq i<j\leq k}2|E (G_{i})||E(G_{j})|.	
		\end{equation}	
	\end{proposition}
	
	\begin{proof}
		Suppose $k=2$. Since $G_1$ and $G_2$ are disjoint induced subgraphs of $G$, the degree of $G$ is at least $\deg(G_1)+\deg(G_2)$, by \Cref{collH.ind.sub.G.degG>=sum.degH}. The remaining 2-switches that transform $G$ are those that replace edges from different components. More specifically: if $ab\in G_1$ and $cd\in G_2$, then ${{a \ b}\choose{c \ d}}$ and ${{a \ b}\choose{d \ c}}$ are 2-switches on $G$. Therefore, 
		\[ \deg(G)=\deg(G_1)+\deg(G_2) +2|E(G_1)||E(G_2)|. \]
		The rest of the proof follows easily by induction on $k$.
	\end{proof}
	
As we have just seen in \Cref{degree.disconn.G}, the degree of a graph is not additive with respect to its connected components (i.e., $\deg(G_1\dot{\cup}G_2)\neq \deg(G_1)+\deg(G_2)$ in general). Nevertheless, the next result shows that the quantity $\delta(G)=|E(G)|^2-\deg(G)$ is additive in that sense.

\begin{proposition}
	\label{delta.additive.disjoint.union}
		Let $G$ be a graph. If $G_1,\ldots,G_k$ are the connected components of $G$, then 
	\begin{equation*}
		\delta(G)=\sum_{i=1}^{k}\delta(G_i).
	\end{equation*}
\end{proposition}

\begin{proof}
	It suffices to prove the case $k=2$; the general statement then follows by a straightforward induction on $k$. So, let $G=G_1\,\dot\cup\,G_2$. By \Cref{degree.disconn.G},
	\[ \deg(G)=\deg(G_1)+\deg(G_2)+2|E(G_1)||E(G_2)|. \]
	Since $|E(G)|=|E(G_1)|+|E(G_2)|$, expanding the square yields
	\[ |E(G)|^2=|E(G_1)|^2+|E(G_2)|^2+2|E(G_1)||E(G_2)|. \]
	Subtracting, the cross term cancels and we obtain
	\[ |E(G)|^2-\deg(G)=\big(|E(G_1)|^2-\deg(G_1)\big)+\big(|E(G_2)|^2-\deg(G_2)\big). \qedhere \]
\end{proof}	
	
\begin{lemma}
	\label{no.A4.cruzado}
	Let $(S,K,I)$ be a split graph and $G$ a graph. Then, no induced $2K_2$, $P_4$, or $C_4$ of $S\circ G$ meets both $V(S)$ and $V(G)$; equivalently, $Q_{S\circ G}^*=Q_S^*\,\dot\cup\,Q_G^*$.
\end{lemma}

\begin{proof}
	The inclusion $Q_S^*\cup Q_G^*\subseteq Q_{S\circ G}^*$ is clear. Since $V(S)\cap V(G)=\varnothing$, we also have $Q_S^*\cap Q_G^*=\varnothing$. Suppose there is an $H\in Q_{S\circ G}^*$ such that $V(H)$ intersects both $V(S)$ and $V(G)$.
	
	If $V(H)\cap K=\varnothing$, pick $u\in V(H)\cap I$. Then, $u$ is nonadjacent to $V(H)\cap I$ (as $I$ is independent) and to $V(H)\cap V(G)$ (no edges between $I$ and $V(G)$), so $u$ is isolated in $H$, a contradiction.
	
	If $V(H)\cap K\neq\varnothing$, then $|V(H)\cap K|=1$; otherwise, two vertices of $V(H)\cap K$ with any vertex of $V(H)\cap V(G)$ would induce a triangle. Write $V(H)\cap K=\{x\}$. Each vertex of $V(H)\cap I$ has $x$ as its only possible neighbor in $H$, so $V(H)\cap I\subseteq N_H(x)$; since also $V(H)\cap V(G)\subseteq N_H(x)$, we get $\deg_H(x)=3$, a contradiction.
\end{proof}	

\begin{theorem}
	\label{deg(SoG)=deg(S)+deg(G)}
	If $(S,K,I)$ is a split graph and $G$ is a graph, then 
	\begin{equation*}
		\label{eq17}
		\deg(S\circ G)=\deg(S)+\deg(G).
	\end{equation*}
\end{theorem}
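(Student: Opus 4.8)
The plan is to compute $\deg(S\circ G)$ by counting, with multiplicity, the induced $4$-vertex subgraphs of positive degree, via the summation formula behind \Cref{degreeofG}: $\deg(S\circ G)=\sum_{H\in Q_{S\circ G}}\deg(H)$, where only the copies of $P_4$, $C_4$, and $2K_2$ contribute. Recall that $S\circ G$ is the disjoint union of $S$ and $G$ together with every edge joining the clique part $K$ of $S$ to $V(G)$; hence each $k\in K$ is adjacent to all of $V(G)$ and to all of $K-k$, while no vertex of $I$ has a neighbour in $V(G)$. Crucially, adjacencies inside $V(S)$ and inside $V(G)$ are untouched by the composition.

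First I would classify each $4$-set $Q\subseteq V(S\circ G)$ by $c=|Q\cap V(G)|$. If $c=0$ then $\langle Q\rangle_{S\circ G}=\langle Q\rangle_S$, so these sets contribute exactly $\deg(S)$ by \Cref{degreeofG}; if $c=4$ then $\langle Q\rangle_{S\circ G}=\langle Q\rangle_G$, contributing exactly $\deg(G)$. It then remains to show that every \emph{mixed} set (those with $1\le c\le 3$) satisfies $\deg(\langle Q\rangle_{S\circ G})=0$, which yields \eqref{eq17} at once by summing the three classes.

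For the mixed sets I would split on $a=|Q\cap K|$. If $a=0$, then $Q$ meets $I$, and every vertex of $Q\cap I$ is isolated in $\langle Q\rangle_{S\circ G}$ (it has no neighbour among $V(G)$, nor among $I$); since $P_4$, $C_4$, and $2K_2$ all have minimum degree at least $1$, such a $Q$ contributes $0$. If $a\ge 1$, fix a clique vertex $k$ and a vertex $g\in Q\cap V(G)$. When $a\ge 2$, two clique vertices together with $g$ form a triangle; when $Q\cap I=\varnothing$, the vertex $k$ is adjacent to the other three vertices and so has degree $3$. Since none of $P_4$, $C_4$, $2K_2$ contains a triangle or a degree-$3$ vertex, these configurations contribute $0$. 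The only remaining case is $a=1$ with $Q$ meeting $I$: then every edge of $\langle Q\rangle_{S\circ G}$ is incident to $k$, except a possible edge between two $G$-vertices, which would again create a triangle with $k$; otherwise all edges share $k$, so there is no pair of disjoint edges, and once more the contribution is $0$.

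The main obstacle is organizing the mixed case so that no admissible $4$-set escapes the analysis. The cleanest unifying observation is that a clique vertex of $Q$ is adjacent to all of $Q$ lying outside $I$, so the presence of a $G$-vertex forces either a triangle, a degree-$3$ vertex, or a star-like graph with no two disjoint edges — and none of these is $P_4$, $C_4$, or $2K_2$. Collecting the three classes then gives $\deg(S\circ G)=\deg(S)+\deg(G)$.
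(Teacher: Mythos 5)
Your proof is correct, and your case analysis of the mixed $4$-sets is exhaustive: $a=0$ leaves every vertex of $Q\cap I$ isolated; $a\ge 2$ forces a triangle through any $G$-vertex; $a=1$ with $Q\cap I=\varnothing$ makes $k$ a vertex of degree $3$; and $a=1$ with $Q\cap I\neq\varnothing$ yields either a triangle or a graph all of whose edges share $k$ — none of which is an induced $P_4$, $C_4$, or $2K_2$. Your route is, however, organized differently from the paper's. The paper first obtains the inequality $\deg(S\circ G)\geq\deg(S)+\deg(G)$ from \Cref{collH.ind.sub.G.degG>=sum.degH}, since $S$ and $G$ are disjoint induced subgraphs of $S\circ G$; it then rules out any extra active $2$-switch in two steps: the $2$-switches confined to $\langle K\cup V(G)\rangle_{S\circ G}\approx G+K_{|K|}$ are absorbed by \Cref{deg(G+Kn)=degG}, and the only remaining pairs of disjoint edges, of the form $\{ik,ab\}$ with $i\in I$, $k\in K$ and $ab\notin E(S)$, are inert because $\langle a,b,i,k\rangle_{S\circ G}$ contains the triangle $kab$. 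You instead carry out a single direct count over all $4$-subsets via the summation formula behind \Cref{degreeofG}, partitioned by $|Q\cap V(G)|$ and $|Q\cap K|$, and you never invoke the join lemma; in fact your argument subsumes it, since \Cref{deg(G+Kn)=degG} is recovered as the special case $S=K_n$ (a split graph with $I=\varnothing$) of the present theorem. What the paper's factored argument buys is brevity, delegating most of the triangle-checking to a previously proved lemma; what yours buys is self-containment and a fully explicit verification that no mixed configuration contributes. Both proofs ultimately rest on the same structural fact: each clique vertex dominates $V(G)$, so a mixed $4$-set is forced to induce a graph with a triangle, a degree-$3$ vertex, or all edges through one vertex, and each of these has $2$-switch-degree $0$.
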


\begin{proof}
	By \Cref{degreeofG} and \Cref{no.A4.cruzado}, $\deg(S\circ G)=\sum_{H\in Q_{S\circ G}^*}\deg(H)=\sum_{H\in Q_S^*}\deg(H)+\sum_{H\in Q_G^*}\deg(H)=\deg(S)+\deg(G)$.
\end{proof}

Note that, although the graph $S\circ G$ may depend on the chosen bipartition $(K,I)$ of $S$ when $S$ is unbalanced, its degree does not: by \Cref{deg(SoG)=deg(S)+deg(G)}, $\deg(S\circ G)=\deg(S)+\deg(G)$ regardless of the bipartition. In particular, taking $S=K_n$ gives $\deg(K_n\circ G)=\deg(G)$, which shows that the degree of a graph imposes no bound on its order or size.

\begin{theorem}
	\label{SoG.act.iff.S_G.act}
	$S\circ G$ is active if and only if $S$ and $G$ are active.
\end{theorem}

\begin{proof}
	By \Cref{2switch.preservs.act}, a vertex is active in a graph if and only if it is non-isolated in its $A_4$-structure. By \Cref{no.A4.cruzado}, no induced $2K_2$, $P_4$, or $C_4$ of $S\circ G$ meets both $V(S)$ and $V(G)$, and so we have $A_4(S\circ G)=A_4(S)\,\dot\cup\,A_4(G)$. Hence, a vertex of $S\circ G$ is non-isolated in $A_4(S\circ G)$ if and only if it is non-isolated in the $A_4$-structure of the factor containing it. Therefore, $S\circ G$ is active if and only if both $S$ and $G$ are.
\end{proof}

	\section{Explicit formulas} \label{sec:formulas.explicitas}
	
	If $G$ is a graph, we define
	\[ \dpe(G)=|\{ \{e,f\}: e,f\in E(G), e\cap f=\varnothing \}|. \]
	In other words, $\dpe(G)$ is the number of unordered pairs of disjoint edges in $G$. The abbreviation “$\dpe$” stands for “disjoint pairs of edges”.
	
	\begin{proposition}
		\label{dpe.formula}
		If $G$ is a graph with degree sequence $d=(d_{v})_{v=1}^{n}$, then:
		\begin{equation*}
			\dpe(G)=\binom{|E(G)|}{2}-\sum_{v=1}^{n} \binom{d_{v}}{2}=\binom{|E(G)| +1}{2}-\frac{1}{2}|d|^{2},
		\end{equation*}
		where $|d|^2=d_1^2+\ldots+d_n^2$ and $\binom{i}{j}=0$ if $i<j$. In particular, $\dpe(G)$ is constant on $V(\mathcal{G}(d))$, i.e., it is an invariant associated with $d$.
	\end{proposition}
	
	\begin{proof}
		We can obtain $\dpe(G)$ by computing the number of pairs of non-disjoint edges in $G$ and then subtracting this number from $\binom{|E(G)|}{2}$, i.e., the total number of edge pairs in $G$. Since there are $d_{v}$ edges incident to each $v\in V(G)$, we get $\binom{d_{v}}{2}$ pairs of edges sharing $v$, and hence $\sum_{v=1}^{n} \binom{d_{v}}{2}$ edge pairs with a common vertex.
		
		The second equality follows from expanding the binomial coefficients and applying simple manipulations.
	\end{proof}
	
	Motivated by \Cref{dpe.formula}, we may use the notation $\dpe(d)$ instead of $\dpe(G)$ whenever $d=d(G)$. Moreover, note that 
	\[ \dpe(G)=|\{H\subseteq G:H\approx 2K_2 \}|. \]
	The next proposition gives a formula expressing the $\dpe$ of a graph in terms of the $\dpe$ of its connected components.
	
	\begin{proposition}
		\label{dpe.components}
		If $G$ is a graph with $k$ components $G_i$, then
		\begin{equation*}
			\dpe(G)=\sum_{i=1}^{k}\dpe(G_i )+\sum_{1\leq i<j\leq k}|E(G_i)||E(G_j)|.
		\end{equation*}
	\end{proposition}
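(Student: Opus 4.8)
The plan is to partition the set of unordered pairs of disjoint edges of $G$ according to how the two edges are distributed among the components. First I would record the basic fact that every edge of $G$ lies entirely within a single component, since distinct components share no vertices. Consequently, any unordered pair $\{e,f\}$ of disjoint edges falls into exactly one of two mutually exclusive types: (i) both $e$ and $f$ lie in the same component $G_i$; or (ii) $e$ and $f$ lie in distinct components $G_i$ and $G_j$ with $i\neq j$.

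For type (i), the observation is that a pair of edges contained in a single component $G_i$ is disjoint when viewed in $G$ if and only if it is disjoint when viewed in $G_i$, since both edges live on $V(G_i)$. Hence, directly from the definition of $dpe$, the number of type-(i) pairs is exactly $\sum_{i=1}^{k} dpe(G_i)$.

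For type (ii), the key point—and really the only thing that needs checking—is that two edges lying in distinct components are \emph{automatically} disjoint, because distinct components are vertex-disjoint. Therefore every choice of one edge from $G_i$ together with one edge from $G_j$ (for $i\neq j$) produces a valid disjoint pair, and conversely every type-(ii) disjoint pair arises this way. For a fixed unordered index pair $\{i,j\}$ there are $\Vert G_i\Vert\cdot\Vert G_j\Vert$ such edge pairs, so the total number of type-(ii) pairs is $\sum_{1\leq i<j\leq k}\Vert G_i\Vert\Vert G_j\Vert$.

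Adding the two contributions yields the claimed formula. I do not anticipate any genuine obstacle in this argument; the only step deserving a moment of care is verifying that the dichotomy above is both exhaustive and mutually exclusive, which is immediate from the vertex-disjointness of distinct components. (One may also phrase the whole computation through the identity $dpe(G)=|\{H\subseteq G:H\approx 2K_2\}|$ noted after \Cref{dpe.fórmula}, counting copies of $2K_2$ by how their two edges meet the components, but the direct counting above is cleaner.)
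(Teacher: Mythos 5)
Your proof is correct and takes essentially the same approach as the paper: same-component pairs contribute $\sum_{i} dpe(G_i)$, while any two edges in distinct components $G_i$, $G_j$ are automatically vertex-disjoint, contributing $\Vert G_i\Vert\Vert G_j\Vert$ per unordered index pair. The only cosmetic difference is that you carry out the partition directly for general $k$, whereas the paper proves the case $k=2$ and then appeals to induction on $k$.
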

	
	\begin{proof}
		Let $k=2$, i.e., $G=G_1\dot{\cup}G_2$. If $e_1\in E(G_1)$ and $e_2\in E(G_2)$, clearly $e_1\cap e_2=\varnothing$ and there are $|E(G_2)|$ pairs of the form $\{e,e_2\}$ for each $e\in E(G_1)$. Then,
		\[ \dpe(G)=\dpe(G_1)+\dpe(G_2)+|E(G_1)||E(G_2)|. \]
		The rest follows easily by induction on $k$.
	\end{proof}
	
	\begin{proposition}
		\label{deg<=m^2-m}
		For any graph $G$ of order $n$ and size $m\geq 0$,
		\begin{equation*}
			\deg(G)\leq 2\dpe(G)\leq m(m-1),
		\end{equation*}
		and equality holds if and only if $G\approx (mK_2) \dot{\cup}\overline{{K}_{n-2m}}$.
	\end{proposition}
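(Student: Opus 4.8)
The plan is to treat the two inequalities separately and to read off the equality case from the right-hand one.

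For the left inequality $\deg(G)\le 2\,dpe(G)$, I would set up a counting map from active 2-switches to disjoint edge pairs. By \eqref{2switch.def}, every 2-switch $\tau$ that is active in $G$ deletes two disjoint edges $ab,cd\in E(G)$, and $\{ab,cd\}$ is exactly a disjoint pair of edges counted by $dpe(G)$. Conversely, fixing such a pair $\{ab,cd\}$, the only 2-switches whose deleted rows are precisely $ab$ and $cd$ are $\binom{a\ b}{c\ d}$ (adding $ac,bd$) and $\binom{a\ b}{d\ c}$ (adding $ad,bc$); any other action matrix that removes $ab,cd$ coincides with one of these two after the row/column permutations that leave a 2-switch unchanged (the identity $PA=A=AP$ noted after \eqref{2switch.def}). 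Hence each disjoint edge pair is the deleted pair of at most two active 2-switches, and since $\deg(G)$ is the number of active 2-switches, $\deg(G)\le 2\,dpe(G)$. As an alternative I could start from \Cref{degreeofG}, write $dpe(G)=\sum_X p_X\,|Q_G(X)|$ where $p_X$ is the number of disjoint edge pairs inside the type $X$ of \Cref{los.11.de.orden4}, and verify $\deg(X)\le 2p_X$ for each of the eleven types.

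For the right inequality I would invoke \Cref{dpe.fórmula} directly. Since $dpe(G)=\binom{\Vert G\Vert}{2}-\sum_{v}\binom{d_v}{2}$ and each $\binom{d_v}{2}\ge 0$, we obtain $2\,dpe(G)=m(m-1)-2\sum_{v}\binom{d_v}{2}\le m(m-1)$, with equality precisely when $\sum_v\binom{d_v}{2}=0$, i.e. when no vertex has degree $\ge 2$. This forces every component of $G$ to be a single edge or an isolated vertex, that is, $G\approx (mK_2)\,\dot{\cup}\,\overline{K}_{n-2m}$.

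It remains to settle the equality statement for the full chain $\deg(G)\le 2\,dpe(G)\le m(m-1)$. For the forward direction, if equality holds then in particular $2\,dpe(G)=m(m-1)$, and the previous paragraph already identifies $G$ as $(mK_2)\dot{\cup}\overline{K}_{n-2m}$. For the converse, when $G$ is such a matching I would compute both sides directly: the $\binom{m}{2}$ pairs of (necessarily disjoint) edges give $dpe(G)=\binom{m}{2}$, hence $2\,dpe(G)=m(m-1)$; and since any four vertices of a matching induce one of $\overline{K_4}$, $\overline{D_4}$, or $2K_2$, every disjoint pair induces a $2K_2$ and therefore supplies two active 2-switches, so $\deg(G)=2\,dpe(G)=m(m-1)$. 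Thus both inequalities are tight exactly for the stated graphs. The one point to get right is the $2$-to-$1$ correspondence in the first paragraph—justifying that $\binom{a\ b}{c\ d}$ and $\binom{a\ b}{d\ c}$ are the only two distinct 2-switches deleting a given pair via the permutation-invariance of action matrices—while everything else is a short calculation resting on \Cref{dpe.fórmula}.
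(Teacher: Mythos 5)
Your proposal is correct and follows essentially the same route as the paper: bounding $\deg(G)$ by observing that each unordered disjoint edge pair supports at most two 2-switches, then applying \Cref{dpe.fórmula} to get $2\,dpe(G)=m(m-1)-\sum_v(d_v^2-d_v)\leq m(m-1)$ with equality exactly when every degree lies in $\{0,1\}$, which forces $G\approx (mK_2)\,\dot{\cup}\,\overline{K}_{n-2m}$. Your only addition is the explicit verification of the converse (that a matching actually attains $\deg(G)=2\,dpe(G)$ because each disjoint pair induces a $2K_2$), a step the paper leaves implicit.
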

	
	\begin{proof}
		For each pair of disjoint edges in $G$, there are at most two 2-switches that replace them. Hence,
		\begin{equation*}
			\deg(G)\leq 2\dpe(G)=m(m-1)-\sum_{v=1}^{n}(d_v^{2}-d_v )\leq m(m-1).
		\end{equation*}
		Clearly, equality is achieved precisely when $d_v \in\{0,1\}$ for each vertex $v\in G$. Under this condition, we must have $d_u =d_v =1$ for every $uv\in E(G)$, and the remaining $n-2m$ vertices of $G$ are isolated.
	\end{proof}
	
	From \Cref{deg<=m^2-m} it follows that $\delta(G)\ge |E(G)|$, for any graph $G$ (recall \Cref{delta.additive.disjoint.union}). 
	
	\begin{proposition}
		\label{number.of.P4.in.G}
		
		Let $G$ be a graph. If $p_4 (G)=|\{H\subseteq G: H\approx P_4\}|$ and $k_3 (G)=|\{H\subseteq G: H\approx K_3\}|$, then
		\begin{equation}
			\label{eq43}
			p_4 (G)+3k_3 (G)=\sum_{uv\in E(G)}(d_{u}-1)(d_{v}-1) .	
		\end{equation}
	\end{proposition}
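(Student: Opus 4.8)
The plan is to prove \eqref{eq43} by a double-counting argument: I would read the right-hand side as the number of certain configurations built around each edge, and then show that each such configuration corresponds to a copy of $P_4$ or of $K_3$ in $G$, keeping careful track of the multiplicities.

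First I would fix, for each edge $e=\{u,v\}\in E(G)$, an arbitrary designation of a first endpoint $u$ and a second endpoint $v$; since the product $(d_u-1)(d_v-1)$ is symmetric in $u,v$, the value of the sum does not depend on this choice. For a fixed edge $uv$, the factor $d_u-1$ counts the neighbors $x\in N_G(u)-v$ and the factor $d_v-1$ counts the neighbors $y\in N_G(v)-u$. Hence $(d_u-1)(d_v-1)$ equals the number of pairs $(x,y)$ with $x\in N_G(u)-v$ and $y\in N_G(v)-u$, and the right-hand side of \eqref{eq43} counts all triples $(\{u,v\},x,y)$ of this form, each encoding the walk $x-u-v-y$ using the edges $xu,uv,vy$.

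Next I would split these configurations according to whether $x=y$. Note that automatically $x\neq u$, $x\neq v$, $y\neq u$, $y\neq v$, and $u\neq v$, so the only possible coincidence is $x=y$; moreover, when $x\neq y$ the three edges $xu,uv,vy$ are distinct. If $x\neq y$, then $x,u,v,y$ are four distinct vertices and $xu,uv,vy$ form a subgraph isomorphic to $P_4$ whose central edge is $uv$. If instead $x=y$, then $x$ is adjacent to both $u$ and $v$, so $\{x,u,v\}$ carries the three edges $xu,uv,vx$, yielding a copy of $K_3$.

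The heart of the argument, and where the main care is required, is the multiplicity bookkeeping. For the $P_4$ contribution I would show that sending a configuration with $x\neq y$ to the subgraph $x-u-v-y$ is a bijection onto the copies of $P_4$ in $G$: a path $a-b-c-d$ has a unique central edge $\{b,c\}$, and once the fixed designation of that edge is taken into account, the endpoints $x,y$ are forced to be $a,d$ in the corresponding order, so each copy of $P_4$ arises from exactly one configuration, contributing $p_4(G)$. For the $K_3$ contribution, a triangle on $\{a,b,c\}$ is produced once for each of its three edges playing the role of the middle edge $\{u,v\}$, with $x=y$ the remaining vertex, so each copy of $K_3$ arises from exactly three configurations, contributing $3k_3(G)$. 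Summing the two types gives $\sum_{uv\in E(G)}(d_u-1)(d_v-1)=p_4(G)+3k_3(G)$. The step most prone to error is precisely recognizing that a triangle is counted three times (once per edge) while a $P_4$ is counted only once (via its unique central edge), together with the verification that the four vertices are genuinely distinct exactly when $x\neq y$.
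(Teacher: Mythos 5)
Your proposal is correct and follows essentially the same double-counting argument as the paper: for each edge $uv$, the product $(d_u-1)(d_v-1)$ counts pairs of pendant edges at $u$ and $v$, which split into $P_4$'s having $uv$ as central edge (each path counted exactly once, via its unique central edge) and triangles containing $uv$ (each triangle counted three times, once per edge). Your additional bookkeeping with ordered pairs $(x,y)$ and the verification of vertex distinctness only makes explicit what the paper's proof of \Cref{number.of.P4.in.G} leaves implicit.
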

	
	\begin{proof}
		For each edge $uv\in G$, we have $d_{u}-1$ options to attach an edge $ux\neq uv$ at $u$, and $d_{v}-1$ options to attach an edge $vy\neq uv$ at $v$. If $G$ has no triangles, there are exactly $(d_{u}-1)(d_{v}-1)$ different $P_4$'s for each edge $uv$ in $G$. If $G$ has triangles, then
		\begin{equation}
			\label{eq44}
			(d_u -1)(d_v -1) = |P_4^*(uv)|+|K_3(uv)|,
		\end{equation} 
		where $P_4^*(uv)$ is the set of all $P_4$'s in $G$ with $uv$ as the central edge, and $K_3(uv)$ is the set of all triangles in $G$ containing $uv$. Since
		\[ \{H\subseteq G: H\approx P_4\} = \dot{\bigcup}_{e\in E(G)} P_4^*(e), \]
		it is clear that $\sum_{e\in E(G)} |P_4^*(e)|=p_4(G)$. On the other hand, let $M$ be the multiset given by the union, over $e\in E(G)$, of all $K_3(e)$. Since each triangle in $\{H\subseteq G:H\approx K_3\}$ appears in $M$ with multiplicity 3, it follows that $\sum_{e\in E(G)} |K_3(e)|=3k_3(G)$. Finally, \eqref{eq43} follows from \eqref{eq44} by summing over all $uv\in E(G)$.
	\end{proof}
	
	We denote by $c_4(G)$ and $k_4(G)$ the number of cycles and cliques of order 4 in $G$, respectively. More precisely:
	\[ c_4(G) = |\{H \subseteq G : H \approx C_4\}|, \]
	\[ k_4(G) = |\{H \subseteq G : H \approx K_4\}| = |Q_G(K_4)|. \]
	
	\begin{theorem}
		\label{deg.general.formula}
		If $G\in V(\mathcal{G}(d))$, then
		\begin{equation}
			\label{deg.formula}
			\deg(G) = 2\dpe(d) + 2c_4(G) - p_4(G).
		\end{equation}
		Letting $|E(G)| = m$, \eqref{deg.formula} can be rewritten as:
		\begin{equation*}
			\deg(G) = m(m+1) - |d|^2 + 2c_4(G) + 3k_3(G) - \sum_{uv \in E(G)} (d_u - 1)(d_v - 1).
		\end{equation*}
	\end{theorem}
	
\begin{proof}
	First, recall that $\dpe(G) = |\{H \subseteq G : H \approx 2K_2\}|$. If $Y \preceq G$ and $|Y| = 4$, observe that
	\begin{equation}
		\label{contar.subgrafos}
		|\{H \subseteq G : H \approx Y\}| = \sum_{i=1}^{11} |Q_G(X_i)| \cdot |\{H \subseteq X_i : H \approx Y\}|,
	\end{equation}
	where the sum in \eqref{contar.subgrafos} ranges over the 11 graphs $X_i$ in \Cref{los.11.de.orden4}. To simplify notation, we write $|X|$ instead of $|Q_G(X)|$ and use $p_4, c_4, k_4, \dpe$ instead of $p_4(G), c_4(G), k_4(G), \dpe(d)$. Applying formula \eqref{contar.subgrafos} for $Y \in \{C_4, P_4, 2K_2\}$, we obtain:
	\begin{align*}
		c_4 &= |D_4| + |C_4| + 3k_4, \\
		p_4 &= 4|C_4| + |P_4| + 2|U_4| + 6|D_4| + 12k_4, \nonumber\\
		\dpe &= 2|C_4| + |P_4| + |2K_2| + |U_4| + 2|D_4| + 3k_4. \nonumber
	\end{align*}
	Subtracting the second equation from twice the third yields
	\[ 2\dpe - p_4 = |P_4| + 2|2K_2| - 2|D_4| - 6k_4, \]
	that is,
	\[ |P_4| + 2|2K_2| = 2\dpe - p_4 + 2\big(|D_4| + 3k_4\big). \]
	Using that $|D_4| + 3k_4 = c_4 - |C_4|$, we obtain
	\[ |P_4| + 2|2K_2| + 2|C_4| = 2\dpe + 2c_4 - p_4. \]
	Finally, the left-hand side equals $\deg(G)$, by \Cref{degreeofG}.
\end{proof}	
	
	
	\Cref{deg.general.formula} says that computing the 2-switch-degree of $G$ essentially reduces to determining $c_4(G)$ and $k_3(G)$ (on which $p_4(G)$ depends). It is well known that these can be obtained from the \emph{adjacency matrix} $A$ of $G$, an $n \times n$ matrix (where $n = |V(G)|$), with entry $[A]_{ij} = 1$ if $ij \in E(G)$ and $0$ otherwise. In fact:
	\[ k_3(G) = \frac{1}{6} \, \text{tr}(A^3), \quad c_4(G) = \frac{1}{2} \sum_{1 \leq i < j \leq n} \binom{[A^2]_{ij}}{2}, \]
	where $\text{tr}(\ast)$ is the trace. We also have:
	\begin{equation*}
		\text{tr}(A^4) = 8c_4(G) + 2|E(G)| + 4\sum_{v=1}^{n} \binom{d_v}{2} = 8c_4(G) - 2|E(G)| + 2|d|^2,
	\end{equation*}
	\begin{equation*}
		c_4(G) = \frac{1}{8} \text{tr}(A^4) + \frac{1}{4} |E(G)| - \frac{1}{4} |d|^2 = \frac{1}{8} \text{tr}(A^4) + \frac{1}{2} \dpe(d) - \frac{1}{4} |E(G)|^2.
	\end{equation*}
	Since $\dpe(d)$ and $\sum_{uv\in E(G)}(d_u-1)(d_v-1)$ are obtained from $d=d(G)$ in $O(n^2)$ time, and both $k_3(G)=\frac{1}{6}\operatorname{tr}(A^3)$ (via $\operatorname{tr}(A^3)=\sum_{i,j}[A^2]_{ij}[A]_{ij}$) and $c_4(G)$ can be read off from the single matrix product $A^2$ in $O(n^2)$ additional time, the computation of $\deg(G)$ reduces to one matrix squaring. Consequently, $\deg(G)$ is computable in $O(n^3)$ time (indeed, within the cost of a single $n\times n$ matrix multiplication) for every graph.
	
	
	As immediate corollaries of \Cref{deg.general.formula}, we can derive simplified formulas for specific graph classes. For example, if $G$ is triangle-free (i.e., $G$ contains no 3-cycles), then $k_3(G) = 0$. If $G$ is $C_4$-free, then $c_4(G) = 0$. The \emph{girth} of a graph $G$, denoted by $g(G)$, is the length of the shortest cycle contained in $G$. If $G$ is acyclic, $g(G)$ is defined as $\infty$. Note that $g(G) \geq 5$ if and only if $k_3(G) = 0 = c_4(G)$.
	
	\begin{corollary}
		\label{deg.girth>4}
		If $G \in V(\mathcal{G}(d))$ and $g(G) \geq 5$, then
		\begin{equation}
			\label{deg.girth>4.formula}
			\deg(G) = 2\dpe(d) - p_4(G).
		\end{equation}
		Letting $|E(G)| = m$, formula \eqref{deg.girth>4.formula} can be rewritten as
		\begin{equation*}
			\deg(G) = m(m+1) - |d|^2 - \sum_{uv \in E(G)} (d_u - 1)(d_v - 1).
		\end{equation*}
	\end{corollary}
	
	\begin{proof}
		It follows from the previous discussion.
	\end{proof}
	
	As an application of \Cref{deg.girth>4}, we can compute the degree of paths and cycles, since $g(P_n) = \infty$ and $g(C_n) = n$.
	
	Paths of order 1 or 2 clearly have degree 0. For $n \geq 3$, $d=d(P_n) = 2^{n-2}1^2$. Since $\dpe(d) = \binom{n-2}{2}$ and $p_4(P_n) = n - 3$, we get
	\begin{equation}
		\label{path.deg}
		\deg(P_n) = (n - 3)^2.
	\end{equation}
	For each $n \geq 3$, $d=d(C_n) = 2^n$ and $\dpe(d) = \binom{n}{2} - n$. Clearly, $\deg(C_3) = 0$ and $\deg(C_4) = 2$. If $n \geq 5$, then $p_4(C_n) = n$ and therefore,
	\begin{equation}
		\label{ciclo.deg}
		\deg(C_n) = n(n - 4).
	\end{equation}
	Note that $\deg(C_n) \geq \deg(P_n)$ for all $n \geq 3$. We can apply formulas \eqref{path.deg} and \eqref{ciclo.deg} to obtain a lower bound on the degree of a graph $G$ that contains some path $P$ or cycle $C$ as an induced subgraph. Indeed, from \Cref{deg.respects.ind.inc} we have $\deg(G) \geq \deg(P), \deg(C)$.
	
	Let $G$ be a graph with degree sequence $d = (d_v)_{v=1}^n$. The numbers
	\[ \zeta_{1}(G)=\sum_{v=1}^n d_{v}^{2}=|d|^{2}, \quad  \zeta_{2}(G)=\sum_{uv\in E(G)} d_{u}d_{v} \]
	are called the \emph{first} and \emph{second Zagreb indices} of $G$, respectively. These are well-known parameters in Chemical Graph Theory and widely studied in the literature. These so-called topological invariants are often used to quantify structural characteristics of chemical compounds. Both $\zeta_1$ and $\zeta_2$ were first introduced in 1972 by Gutman and Trinajstić (see \cite{gutman1972graph}) and have been used to relate molecular properties such as chemical stability and $\pi$-electron energy. The latter refers to the electrons involved in $\pi$ bonds, a type of covalent bond formed by the lateral overlap of $\pi$ atomic orbitals. There is a relationship between the total energy of $\pi$ electrons and the Zagreb indices, since the latter capture aspects of molecular connectivity and branching that influence electron delocalization. For example, in certain studies, it has been found that greater branching (reflected in higher Zagreb index values) can lead to lower stability and thus higher total $\pi$-electron energy. Therefore, Zagreb indices can be useful tools for estimating and understanding how molecular structure affects $\pi$-electron energy.
	
	It is a curious fact that these chemical indices arise naturally in our context. Indeed, since $\sum_{uv \in E(G)} (d_u + d_v) = \zeta_1(G)$, we have
	\begin{equation}
		\label{sum.zagreb.1,2}
		\sum_{uv \in E(G)} (d_u - 1)(d_v - 1) = \zeta_2(G) - \zeta_1(G) + |E(G)|.
	\end{equation}
	We know that the left-hand sum in \eqref{sum.zagreb.1,2} (which equals $p_4(G) + 3k_3(G)$ by \Cref{number.of.P4.in.G}) is part of the formula in \Cref{deg.general.formula} and \Cref{deg.girth>4} for computing $\deg(G)$. Hence, we see a link between the 2-switch and the Zagreb indices. The next theorem tells us how the 2-switch-degree is tied to $\zeta_2$.
	
	\begin{theorem}
		\label{relacion.deg.zeta2}
		If $G$ is a graph, then
		\[ \deg(G) + \zeta_2(G) = |E(G)|^2 + 2c_4(G) + 3k_3(G). \]
	\end{theorem}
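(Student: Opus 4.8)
The plan is to derive this identity by direct substitution, combining the rewritten (second) form of the degree formula from \Cref{deg.general.fórmula} with the Zagreb-index identity \eqref{sum.zagreb.1,2}. Writing $m=\Vert G\Vert$, \Cref{deg.general.fórmula} gives
\[ \deg(G) = m(m+1) - |s|^2 + 2c_4(G) + 3k_3(G) - 4k_4(G) - \sum_{uv \in E(G)} (d_u - 1)(d_v - 1), \]
so the entire task reduces to rewriting the trailing sum in terms of $\zeta_2(G)$ and checking that the bookkeeping collapses to the claimed right-hand side.

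First I would replace that sum using \eqref{sum.zagreb.1,2}, namely $\sum_{uv \in E(G)} (d_u - 1)(d_v - 1) = \zeta_2(G) - \zeta_1(G) + m$. Next I would invoke the definition $\zeta_1(G) = |s|^2$, so that the two occurrences of $|s|^2$ cancel after the substitution. What remains on the right is $m(m+1) - m + 2c_4(G) + 3k_3(G) - 4k_4(G) - \zeta_2(G)$, and collapsing $m(m+1) - m = m^2$ yields
\[ \deg(G) = m^2 + 2c_4(G) + 3k_3(G) - 4k_4(G) - \zeta_2(G). \]
Moving $\zeta_2(G)$ to the left-hand side and recalling $m=\Vert G\Vert$ produces the stated equality.

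Since every ingredient is already established earlier in the paper, there is no genuine obstacle here; the proof is pure algebraic bookkeeping. The one point that requires care is tracking the cancellation of $|s|^2$ against $\zeta_1(G)$ together with the simplification $m(m+1)-m=m^2$, so I would present the substitution as a single displayed chain of equalities to make these cancellations transparent.
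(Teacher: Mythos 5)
Your proof is correct and is essentially the paper's own argument: the paper's proof reads ``It follows from the previous discussion,'' and that discussion consists precisely of substituting \eqref{sum.zagreb.1,2} into the second form of the formula in \Cref{deg.general.fórmula} and cancelling $|s|^2=\zeta_1(G)$, exactly as you do. Your explicit chain of substitutions, including the simplification $m(m+1)-m=m^2$, just makes the intended bookkeeping explicit.
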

	
	\begin{proof}
		It follows from the previous discussion.
	\end{proof}
	
	When $g(G) \geq 5$, we obtain a remarkable corollary of \Cref{relacion.deg.zeta2}: the sum of the 2-switch-degree and the second Zagreb index equals the square of the size.
	
	\begin{corollary}
		If $G$ is a graph with $g(G) \geq 5$, then
		\begin{equation}
			\label{eq.deg.zeta2.girth>4}
			\deg(G) + \zeta_2(G) = |E(G)|^2.
		\end{equation}
	\end{corollary}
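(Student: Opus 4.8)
The plan is to invoke \Cref{relacion.deg.zeta2} directly and argue that the girth hypothesis annihilates all three correction terms on its right-hand side. Recall that that theorem gives, for every graph $G$,
\[ \deg(G) + \zeta_2(G) = \Vert G\Vert^2 + 2c_4(G) + 3k_3(G) - 4k_4(G). \]
Thus the whole task reduces to showing that $c_4(G) = k_3(G) = k_4(G) = 0$ whenever $g(G) \geq 5$, at which point the right-hand side collapses to $\Vert G\Vert^2$ and yields exactly \eqref{eq.deg.zeta2.girth>4}.

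First I would use the equivalence already recorded in the discussion preceding \Cref{deg.girth>4}, namely that $g(G) \geq 5$ holds if and only if $k_3(G) = 0 = c_4(G)$. The forward direction is immediate: a triangle is a $3$-cycle and a copy of $C_4$ is a $4$-cycle, so either one would force the girth below $5$; conversely, a shortest cycle of length at least $5$ leaves no room for a $C_3$ or a $C_4$. This disposes of two of the three correction terms at once.

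The only remaining term is $k_4(G)$, and here the single observation needed is that every $K_4$ contains triangles (four of them). Hence $k_3(G) = 0$ forces $k_4(G) = 0$ as well. Substituting $c_4(G) = k_3(G) = k_4(G) = 0$ into the identity of \Cref{relacion.deg.zeta2} gives the claim. There is no genuine obstacle: the corollary is a one-line specialization of \Cref{relacion.deg.zeta2}, and the only point deserving a moment's care is confirming that the $k_4$ term also vanishes, which is immediate from the triangle-freeness already guaranteed by $g(G)\geq 5$.
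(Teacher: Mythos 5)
Your proof is correct and follows exactly the paper's route: the paper likewise derives the corollary directly from \Cref{relacion.deg.zeta2} by noting that $c_4(G) = k_3(G) = k_4(G) = 0$ when $g(G) \geq 5$. You merely spell out the (easy) justifications for these vanishings, including the observation that a $K_4$ contains triangles, which the paper leaves implicit.
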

	
	\begin{proof}
		It follows immediately from \Cref{relacion.deg.zeta2}, since $c_4(G) = 0 = k_3(G)$ when $g(G) \geq 5$.
	\end{proof}
	
	Now suppose we are studying extremal values of the second Zagreb index (and which graphs attain them) in a family of graphs with fixed size. Equality \eqref{eq.deg.zeta2.girth>4} may be very useful in this case, as it tells us that minimizing (maximizing) $\zeta_2$ is equivalent to maximizing (minimizing) the degree.
	
	
	\section{Degree of trees and unicyclic graphs} \label{sec:deg.arboles.unicicl}
	
	We denote by $\mathcal{F}(d)$ the subgraph of $\mathcal{G}(d)$ induced by vertices of $\mathcal{G}(d)$ that are forests. Since all members of $V(\mathcal{F}(d))$ have the same number of connected components, it follows that if one of them is a tree, then all the others are trees as well. A 2-switch $\tau$ on a forest $F \in V(\mathcal{F}(d))$ is called an \emph{f-switch} on $F$ if $\tau(F) \in V(\mathcal{F}(d))$. When $F$ is a tree, we use the term \emph{t-switch} instead of f-switch and the symbol $\mathcal{T}(d)$ instead of $\mathcal{F}(d)$. The \emph{f-degree} of a forest $F$, denoted by $\deg_f(F)$, is the degree of the vertex $F$ in the graph $\mathcal{F}(d)$. Equivalently, $\deg_f(F)$ is the number of f-switches that act on $F$. Clearly, $\deg_f(F) \leq \deg(F)$, because $\mathcal{F}(d) \preceq \mathcal{G}(d)$.
	
	The following result provides a formula to compute the f-degree of a tree.
	
	\begin{theorem}
		\label{deg_f.tree}
		Let $T$ be a tree with $d(T)=d=(d_{v})_{v=1}^{n}$. Then,
		\begin{equation*}
			\deg_f(T) = \dpe(d) = \binom{n-1}{2} - \sum_{v=1}^{n} \binom{d_v}{2} = \binom{n}{2} - \frac{1}{2}|d|^2,
		\end{equation*}
		where $\binom{i}{j}=0$ if $i<j$.
	\end{theorem}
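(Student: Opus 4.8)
The plan is to compute $\deg_f(T)$ by counting t-switches directly, establishing a bijection between the t-switches acting on $T$ and the unordered pairs of disjoint edges of $T$, and then invoking \Cref{dpe.fórmula}. First I would recall that $\deg_f(T)$ is exactly the number of t-switches on $T$, and that every t-switch removes some pair of disjoint edges $ab,cd\in E(T)$. Conversely, a fixed pair of disjoint edges $\{ab,cd\}$ gives rise to exactly two candidate 2-switches: $\binom{a\ b}{c\ d}$, which adds $ac,bd$, and $\binom{a\ b}{d\ c}$, which adds $ad,bc$. Thus the whole statement reduces to the claim that, for each pair of disjoint edges of a tree, exactly one of these two candidates is a t-switch.

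The heart of the argument is a structural lemma. Since $T$ is a tree, every edge is a bridge, so deleting the two distinct edges $ab,cd$ produces exactly three subtrees. I would relabel the endpoints (without loss of generality) so that the unique $a$--$d$ path in $T$ reads $a,b,\ldots,c,d$; that is, $b$ and $c$ are the inner endpoints and $a,d$ the outer ones. The $b$--$c$ portion of this path uses neither $ab$ nor $cd$, so it survives the deletion and places $b,c$ in a common middle component $T_m$; meanwhile $a$ and $d$ lie in two further components $T_a,T_d$, and these three components are pairwise distinct because the only $a$--$d$ path in $T$ traverses both deleted edges. With this bookkeeping in hand, the switch $\binom{a\ b}{c\ d}$ adds $ac$ and $bd$, which chain $T_a$--$T_m$--$T_d$ into one connected graph on $n$ vertices with $n-1$ edges, hence a tree; whereas $\binom{a\ b}{d\ c}$ adds the edge $bc$ with both endpoints already in $T_m$, closing a cycle, so its output (disconnected, with $n-1$ edges) is not a tree. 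I would also verify that the surviving switch is automatically active: $ac\notin E(T)$ since $a\in T_a$ but $c\notin T_a$, and likewise $bd\notin E(T)$ since $d\notin T_m$, so the requirement $\{ac,bd\}\subseteq E(\overline{T})$ holds.

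Granting the lemma, the map sending each t-switch to the pair of disjoint edges it removes is well defined, injective (each pair supports a unique t-switch), and surjective (each pair supports at least one), hence a bijection onto the set of pairs of disjoint edges. Therefore $\deg_f(T)=dpe(T)=dpe(s)$. The closing step is then routine: applying \Cref{dpe.fórmula} with $\Vert T\Vert=n-1$ yields
\[
dpe(s)=\binom{n-1}{2}-\sum_{v=1}^{n}\binom{d_v}{2}=\binom{n}{2}-\frac{1}{2}|s|^2,
\]
which is the asserted formula.

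I expect the main obstacle to be the component bookkeeping inside the lemma: correctly singling out the inner versus outer endpoints via the unique $a$--$d$ path, confirming that deletion yields precisely three distinct subtrees with $b,c$ together and $a,d$ apart, and then checking that one recombination reconnects the three pieces while the other reinstates a cycle. Once that case analysis is pinned down, the activeness of the good switch and the final bijection count are essentially immediate, and the numerical identities follow directly from the earlier $dpe$ formula.
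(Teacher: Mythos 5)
Your proposal follows the same route as the paper's own proof: label the pair of disjoint edges so that the unique $a$--$d$ path in $T$ reads $a,b,\ldots,c,d$, show that $\binom{a\ b}{c\ d}$ is the unique t-switch supported by that pair, and finish with \Cref{dpe.fórmula} using $\lVert T\rVert=n-1$. Your component bookkeeping ($T_a$, $T_m$, $T_d$ after deleting the two bridges) and your activity check for the good switch ($ac$ and $bd$ join distinct components of $T-\{ab,cd\}$, hence cannot be edges of $T$) are correct, and in fact somewhat more detailed than what the paper writes down.

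However, your treatment of the second candidate $\tau'=\binom{a\ b}{d\ c}$ has a genuine, though easily repaired, gap: you assert that its output is a disconnected graph with $n-1$ edges, obtained by ``closing a cycle'' in $T_m$, which tacitly assumes $bc\notin E(T)$. If the $b$--$c$ portion of the path is the single edge $bc$ --- already the case for the pair $\{ab,cd\}$ in the path $abcd\approx P_4$, so this is not a negligible corner case --- then $\tau'$ would have to add an edge that is already present, and by the definition of a 2-switch it is therefore \emph{inactive}: $\tau'(T)=T$, whose output certainly is a tree, contradicting your lemma as literally stated. The final count survives, but for a different reason than the one you give: an inactive 2-switch does not act on $T$ and hence contributes nothing to $\deg_f(T)$, which counts active 2-switches whose image is a distinct vertex of $\mathcal{F}(s)$. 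The paper's proof splits precisely at this point: if $bc\notin E(T)$, then $\tau'$ disconnects $T$ (your argument); if $bc\in E(T)$, then $\tau'$ is inactive. Adding this one-line case distinction makes your proof complete and essentially identical to the published one.
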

	
	\begin{proof}
		Assuming that $xb\ldots cy$ is the path connecting $x$ to $y$ in $T$, we observe that:
		\begin{enumerate}[(1).]
			\item $\binom{x \ b}{c \ y}$ is a t-switch on $T$;
			\item if $bc \in E(T)$, then $\binom{x \ b}{y \ c}$ cannot be applied to $T$;
			\item if $bc \notin E(T)$, then $\binom{x \ b}{y \ c}$ disconnects $T$, and so is not a t-switch on $T$.
		\end{enumerate}
		With this in mind, it is now easy to see that we can perform exactly one t-switch in $T$ for each unordered pair of disjoint edges of $T$. Then, $\deg_f(T)=\dpe(d)$, and we apply \Cref{dpe.formula}.
	\end{proof}
	
	It is quite surprising that the f-degree of a tree is an invariant associated with its degree sequence. \Cref{deg_f.tree} has a significant impact on the global structure of $\mathcal{T}(d)$. Indeed, we see that every vertex of this graph has the same degree, namely $\dpe(d)$. Moreover, we already know that $\mathcal{T}(d)$ is connected (see \cite{schvollner_pseudoforests}). Hence, we obtain the following corollary.
	
	\begin{corollary}
		\label{T_d_regular}
		The graph $\mathcal{T}(d)$ is connected and $\dpe(d)$-regular.
	\end{corollary}
	
	\begin{proof}
		It follows from the preceding discussion.
	\end{proof}
	
	Despite what we have just observed for $\deg_f(T)$, the next corollary shows that $\deg(T)$ is not generally an invariant associated with $d(T)$. 
	
	\begin{corollary}
		\label{deg.tree.G(d)}
		If $T$ is a tree of order $n$, then
		\begin{equation}
			\deg(T) = 2\deg_f(T) - \sum_{uv \in E(T)} (d_u - 1)(d_v - 1) = (n - 1)^2 - \zeta_2(T).
		\end{equation}
	\end{corollary}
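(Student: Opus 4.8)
The plan is to assemble this identity from results already in hand, since every quantity on the right-hand side has been computed earlier. First I would observe that a tree is acyclic, so $g(T) = \infty \geq 5$, and hence \Cref{deg.girth>4} applies directly, giving $\deg(T) = 2\,dpe(s) - p_4(T)$. Because $s = s(T)$, \Cref{deg_f.tree} lets me replace $dpe(s)$ by $\deg_f(T)$, so that $\deg(T) = 2\deg_f(T) - p_4(T)$.

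The next step is to eliminate $p_4(T)$. Since $T$ is a tree it contains no triangles, so $k_3(T) = 0$, and therefore \Cref{number.of.P4.in.G} collapses to $p_4(T) = \sum_{uv \in E(T)} (d_u - 1)(d_v - 1)$. Substituting this into the previous expression immediately yields the first claimed equality, namely $\deg(T) = 2\deg_f(T) - \sum_{uv \in E(T)} (d_u - 1)(d_v - 1)$.

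To obtain the second equality I would pass to the Zagreb indices. Equation \eqref{sum.zagreb.1,2} rewrites the edge sum as $\zeta_2(T) - \zeta_1(T) + \Vert T\Vert$, while \Cref{deg_f.tree} gives $2\deg_f(T) = n(n-1) - |s|^2 = n(n-1) - \zeta_1(T)$. Using $\Vert T\Vert = n - 1$ and substituting, the two $\zeta_1(T)$ terms cancel and the constants combine via $n(n-1) - (n-1) = (n-1)^2$, leaving $\deg(T) = (n-1)^2 - \zeta_2(T)$.

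I do not anticipate any genuine obstacle: the whole argument is a bookkeeping exercise chaining together \Cref{deg.girth>4}, \Cref{number.of.P4.in.G}, and \Cref{deg_f.tree}, with the acyclic and triangle-free properties of $T$ supplying the hypotheses that make each of them applicable. The only point needing a moment's care is the final cancellation of the $\zeta_1$ terms together with the arithmetic $n(n-1) - (n-1) = (n-1)^2$; everything preceding it is a direct substitution.
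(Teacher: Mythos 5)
Your proposal is correct and takes essentially the same route as the paper: both arguments chain the general degree formula (you via its girth-$\geq 5$ specialization, \Cref{deg.girth>4}, the paper via \Cref{deg.general.fórmula} directly) together with \Cref{deg_f.tree}, the triangle-free collapse of \Cref{number.of.P4.in.G}, and the Zagreb identity \eqref{sum.zagreb.1,2}. The only difference is expository, in that you spell out the substitutions and the final cancellation that the paper's two-line proof leaves implicit.
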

	
	\begin{proof}
		The first equality is an immediate consequence of \Cref{deg.general.formula,deg_f.tree}, where $k_3(T) = 0 = c_4(T)$ and $\dpe(T) = \deg_f(T)$. The second equality follows from \Cref{deg.girth>4}.
	\end{proof}
	
	If $T$ is a tree in $V(\mathcal{G}(d))$, note that $\deg(T) - \deg_f(T)$ is the number of disconnected neighbors of $T$ in $\mathcal{G}(d)$.\\
	
	We denote by $\mathcal{U}(d)$ the subgraph of $\mathcal{G}(d)$ induced by unicyclic graphs with degree sequence $d$. It is worth noting that $\mathcal{U}(d)$ is connected (see \cite{schvollner_pseudoforests}). A 2-switch $\tau$ on a unicyclic graph $U \in V(\mathcal{U}(d))$ is said to be a \emph{u-switch} on $U$ if $\tau(U) \in V(\mathcal{U}(d))$. The \emph{u-degree} of a unicyclic graph $U$, denoted by $\deg_u(U)$, is the degree of the vertex $U$ in the graph $\mathcal{U}(d)$. Equivalently, $\deg_u(U)$ is the number of u-switches acting on $U$. Clearly, $\deg_u(U) \leq \deg(U)$, because $\mathcal{U}(d) \preceq \mathcal{G}(d)$.
	
	In a unicyclic graph $U$, we can always identify the following two subgraphs: the unique cycle $C$ of $U$ and the forest  
	\[ F = U - E(C) - \{v \in V(C) : \deg_U(v) = 2 \}. \]
	Then, $E(U) = E(C) \dot{\cup} E(F)$ and $|V(F) \cap V(C)|$ is the number of components of $F$. If $d(U) = 2^n$, then $U \approx C_n$ and $F = K_0$. But if $d_v \neq 2$ for some $v \in V(U)$, then $E(F) \neq \varnothing$. Note that $C$ is an induced subgraph of $U$, but $F$, in general, is not.
	
	\begin{corollary}
		If $U \in \mathcal{U}(d)$ is a unicyclic graph of order $n$ with a cycle of length $c$, then 
		\begin{equation*}
			\deg(U) =
			\begin{cases}
				n^2 - \zeta_2(U) + 3, & \text{if } c = 3 \\
				n^2 - \zeta_2(U) + 2, & \text{if } c = 4 \\
				n^2 - \zeta_2(U), & \text{if } c \geq 5 \\
			\end{cases}
		\end{equation*}
	\end{corollary}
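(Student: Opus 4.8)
The plan is to apply \Cref{relacion.deg.zeta2} directly, since a unicyclic graph has a very rigid small-subgraph structure that makes every term on the right-hand side of that theorem computable. Recall that \Cref{relacion.deg.zeta2} gives
\[
\deg(U) = \Vert U\Vert^2 + 2c_4(U) + 3k_3(U) - 4k_4(U) - \zeta_2(U).
\]
The first step is to note that for a unicyclic graph of order $n$ we have $\Vert U\Vert = n$ (a unicyclic graph has exactly as many edges as vertices), so $\Vert U\Vert^2 = n^2$. This already accounts for the $n^2$ appearing in every case.

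The main work is to evaluate $k_3(U)$, $c_4(U)$, and $k_4(U)$ for a unicyclic graph, and here the key observation is that a unicyclic graph contains exactly one cycle, namely $C$ of length $c$. A copy of $K_3$ as a subgraph is itself a cycle of length $3$, so $k_3(U) = 1$ if $c = 3$ and $k_3(U) = 0$ otherwise; similarly any $C_4$ subgraph is a $4$-cycle, forcing $c_4(U) = 1$ when $c = 4$ and $c_4(U) = 0$ otherwise. Finally $k_4(U) = 0$ in all cases, since a $K_4$ contains multiple cycles and a unicyclic graph cannot. Substituting these counts into the displayed formula yields $2c_4 + 3k_3 - 4k_4 = 3$ when $c=3$, $= 2$ when $c=4$, and $=0$ when $c\geq 5$, which gives exactly the three cases of the claim.

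The step requiring the most care is justifying that the single cycle $C$ of $U$ is the \emph{only} source of short subgraph-cycles, i.e. that $U$ contains no $K_3$, $C_4$, or $K_4$ other than what $C$ itself provides. This rests on the structural fact that a connected unicyclic graph has exactly one cycle: if $U$ contained any subgraph isomorphic to one of these, that subgraph would contribute a cycle, and two distinct cycles would force at least $n+1$ edges, contradicting $\Vert U\Vert = n$. I would make this precise by observing that the cycle subspace of $U$ is one-dimensional, so every cycle of $U$ (as an edge set) coincides with $E(C)$; in particular there is no cycle of length different from $c$, and for $c\geq 5$ there are no triangles or $4$-cycles at all. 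This is the crux, but it is a standard consequence of the definition of unicyclic and should be stated rather than belabored. Once the three subgraph counts are settled, the result is immediate from \Cref{relacion.deg.zeta2}.
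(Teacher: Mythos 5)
Your proof is correct and follows essentially the same route as the paper, which derives the corollary immediately from \Cref{relacion.deg.zeta2}; you simply make explicit the subgraph counts ($\Vert U\Vert = n$, $k_3(U)=[c=3]$, $c_4(U)=[c=4]$, $k_4(U)=0$) that the paper leaves implicit. Your justification via the one-dimensional cycle space is a sound way to pin down the crux that the unique cycle $C$ is the only source of $K_3$ or $C_4$ subgraphs.
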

	
	\begin{proof}
		It follows immediately from \Cref{relacion.deg.zeta2}.
	\end{proof}
	
	\begin{theorem}[\cite{schvollner_pseudoforests}]
		\label{u-switch.caract}
		Let $\tau$ be a 2-switch acting on the edges $e_1$ and $e_2$ of a unicyclic graph $U$. If $C$ and $F$ are, respectively, the cycle and the forest of $U$, then we have the following:
		\begin{enumerate}
			\item if $e_1, e_2 \in F$ and $e \in E(C)$, then $\tau$ is a u-switch on $U$ if and only if it is a t-switch on $U - e$;
			\item if $e_1 \in C$ and $e_2 \in F$, then $\tau$ is a u-switch on $U$;
			\item if $e_1, e_2 \in C$, then $\tau$ is a u-switch on $U$ if and only if $\tau(C) \approx C$.
		\end{enumerate}
	\end{theorem}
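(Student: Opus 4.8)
The plan is to reduce every assertion in the theorem to a question about \emph{connectivity}. Since a $2$-switch preserves the degree sequence, $\tau(U)$ has the same order $n$ and the same size $n$ as $U$. For a graph on $n$ vertices with $n$ edges, the cyclomatic number equals the number of connected components; such a graph is therefore unicyclic exactly when it is connected. Consequently $\tau$ is a u-switch on $U$ if and only if $\tau(U)$ is connected, and the three items become three short connectivity checks, organized by where the edges $e_1,e_2$ sit relative to the cycle $C$ and the forest $F$.

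For item (1) the key observation is that $\tau$ only removes the forest edges $e_1,e_2\in F$ and adds edges it could not have added before; in particular it never deletes an edge of $C$, so $\tau(U)$ still contains the whole cycle $C$. Fix $e\in E(C)$. Then $U-e$ is a tree (removing a cycle edge from a unicyclic graph), $\tau$ is active on $U-e$ as well (because $e$ is distinct from $e_1,e_2$ and from the two added edges, the latter since those are non-edges of $U$ while $e\in E(U)$), and one checks directly that $\tau(U)=\tau(U-e)+e$, equivalently $\tau(U-e)=\tau(U)-e$. If $\tau$ is a t-switch on $U-e$, then $\tau(U-e)$ is a tree and adding back $e$ keeps it connected, so $\tau(U)$ is connected, hence a u-switch. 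Conversely, if $\tau$ is a u-switch, then $\tau(U)$ is unicyclic and still contains $C$, so $C$ must be its \emph{unique} cycle; since $e\in E(C)$, deleting $e$ from this unicyclic graph yields the tree $\tau(U)-e=\tau(U-e)$, i.e.\ $\tau$ is a t-switch on $U-e$.

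For item (2), the forest edge $e_2$ is a bridge of $U$, so $U-e_2$ has exactly two components, one of which, say $A$, contains the entire cycle $C$; in particular both endpoints of the cycle edge $e_1$ lie in $A$, while the two endpoints of $e_2$ are separated, one in $A$ and one in the other component $B$. For either admissible pairing of the four vertices, one of the two added edges lies inside $A$ and the other joins $A$ to $B$; since $e_1$ lies on the cycle $C\subseteq A$ it is not a bridge of $A$, so deleting it leaves $A$ connected, and the edge joining $A$ to $B$ reattaches $B$. Hence $\tau(U)$ is always connected, so $\tau$ is always a u-switch. Item (3) is the complementary situation: $\tau$ leaves $F$ untouched and acts only on $C$, turning it into the $2$-regular graph $\tau(C)$ on $V(C)$. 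Because $U$ is unicyclic it has no chords, so the edges of $\tau(U)$ among $V(C)$ are exactly the edges of $\tau(C)$, and each hanging tree still meets the cycle vertices in a single attachment vertex. Thus the components of $\tau(U)$ are in bijection with the components of $\tau(C)$, so $\tau(U)$ is connected iff $\tau(C)$ is connected; a connected $2$-regular graph on $|V(C)|$ vertices is precisely a single cycle, giving $\tau(U)$ unicyclic iff $\tau(C)\approx C$.

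The argument is organizational rather than deep: everything rests on the reduction ``unicyclic $\iff$ connected'' for graphs with $|E|=|V|$, after which each case is a brief connectivity count. The point requiring the most care is the forward direction of (1), where one must argue that the surviving copy of $C$ is forced to be the \emph{unique} cycle of $\tau(U)$, so that $e$ is genuinely a cycle edge whose deletion produces a tree; and in (2) one must verify the bridge bookkeeping for both pairings of $\{a,b,c,d\}$. I would also state explicitly that $\tau$ active on $U$ implies $\tau$ active on $U-e$, so that the phrase ``t-switch on $U-e$'' is well posed.
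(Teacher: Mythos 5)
Your proof is correct, and it is worth noting that the paper itself offers no proof to compare against: Theorem~5.7 (\emph{u-switch.caract}) is imported verbatim from the cited reference \cite{vnsigma.2switch.unic.pseudof} and stated without argument. Your proposal therefore supplies a self-contained derivation, and its organizing principle is sound: since a 2-switch preserves order and size, $\tau(U)$ has $n$ vertices and $n$ edges, and for such graphs ``unicyclic'' is equivalent to ``connected'' (with $m=n$ the cyclomatic number $m-n+c$ equals the number of components $c$), so being a u-switch reduces to a connectivity check in each of the three cases. The case analyses are all valid: in (1) you correctly verify the two facts that make the equivalence non-vacuous, namely that $\tau$ active on $U$ is active on $U-e$ (the added edges are non-edges of $U$, hence distinct from $e\in E(U)$) and that $\tau(U-e)=\tau(U)-e$, and the forward direction correctly forces the surviving copy of $C$ to be the unique cycle of the unicyclic graph $\tau(U)$; in (2) the bridge bookkeeping covers both pairings of the added edges, using that $e_1$ lies on $C$ and hence is not a bridge of the component $A$ containing the cycle; in (3) you implicitly use two consequences of unicyclicity that you correctly flag --- $C$ has no chords in $U$, and each component of $F$ meets $V(C)$ in exactly one vertex --- so that the components of $\tau(U)$ biject with those of the 2-regular graph $\tau(C)$, which is connected precisely when it is a single cycle on $|C|$ vertices, i.e., $\tau(C)\approx C$. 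Compared with a direct case-by-case verification of unicyclicity (checking that exactly one cycle survives each exchange), your reduction to connectivity buys uniformity: all three items become short counting-free connectivity arguments, and the only genuinely structural inputs are the standard facts about bridges and chords in unicyclic graphs, each of which you justify.
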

	
\begin{theorem}\label{deg_u.unicyclic}
	Let $U$ be a unicyclic graph with cycle $C$ and forest $F$, and define $\zeta_C(U)=\sum_{uv \in E(C)} (d_u - 2)(d_v - 2)$ (where the degrees $d_u$, $d_v$ are taken in $U$). Then,
	\begin{align*}
		\deg_u(U) = & \deg(U) - \deg(C) + \dpe(C) - \dpe(F) + p_4(F) + \zeta_C(U) \\
		= & \deg(U) - \deg(C) - \deg(F) + \dpe(C) + \dpe(F) + \zeta_C(U).
	\end{align*}
\end{theorem}

\begin{proof}
	 As $E(U) = E(C) \dot{\cup} E(F)$, every 2-switch on $U$ falls into exactly one of three classes:
	\begin{enumerate}[(1).]
		\item those between edges of $C$;
		\item those between edges of $F$;
		\item those between an edge of $C$ and an edge of $F$.
	\end{enumerate}
	We count the u-switches in each class; $\deg_u(U)$ is the sum of the three counts. We will use repeatedly the following characterization (see Section 2 of \cite{schvollner_pseudoforests}): a 2-switch $\binom{a \ b}{c \ d}$ on a tree $T$ is a t-switch if and only if $T$ contains the path $ab \ldots cd$ or $ba \ldots dc$. 
	\begin{enumerate}[(1).]
		\item Notice that, for every $\{e,f\}\subseteq E(C)$ with $e\cap f=\varnothing$, we have exactly one 2-switch on $C$ that produces a new cycle isomorphic to $C$. The other possible 2-switch on $C$ replacing $\{e,f\}$, if applicable, splits $C$ into two disjoint cycles. Therefore, by \Cref{u-switch.caract}, there are $\dpe(C)$ u-switches of class (1).  
		
		 
		\item Let $T_1, \ldots, T_k$ be the components of $F$, and $r_i$ the unique vertex of $T_i$ in $V(C)$, called the \emph{root} of $T_i$. Fix $e \in E(C)$; then $U - e$ is a tree containing each $T_i$ as a subtree. If $e_1, e_2 \in E(T_i)$, the path of $U-e$ connecting them lies in $T_i$, so by the characterization above and \Cref{u-switch.caract} the u-switches acting on $T_i$ are exactly its t-switches: $\dpe(T_i)$ of them, by \Cref{deg_f.tree}. If instead $e_1 = ab \in E(T_i)$ and $e_2 = xy \in E(T_j)$ with $i \neq j$, choose $a$ (resp.\ $x$) in the root-component of $T_i - e_1$ (resp.\ $T_j - e_2$); then $b \neq r_i$, $y \neq r_j$, and $U-e$ contains the path $ba \cdots r_i \cdots r_j \cdots xy$. Of the two pairings, $\binom{a \ b}{y \ x}$ is always a t-switch on $U-e$ (since $b \neq r_i$ and $y \neq r_j$); the other, $\binom{a \ b}{x \ y}$, when applicable is not a t-switch on $U-e$ (its connecting path has the wrong form), and fails to be applicable if and only if $ax = r_ir_j \in E(C)$. By \Cref{u-switch.caract}, only the first is a u-switch; each edge $uv \in E(C)$ blocks $(d_u-2)(d_v-2)$ pairs. Summing over components via \Cref{degree.disconn.G,dpe.components}, class (2) contains $\deg(F) - \zeta_C(U)$ 2-switches, of which $\dpe(F)$ are u-switches.

		\item By \Cref{u-switch.caract}, every 2-switch of this class is a u-switch. Since classes (1) and (2) contain, respectively, $\deg(C)$ and $\deg(F) - \zeta_C(U)$ 2-switches, class (3) contains exactly $\deg(U) - \deg(C) - \big( \deg(F) - \zeta_C(U) \big)$ 2-switches, all of them u-switches.
	\end{enumerate}
	
	Adding the three counts, we obtain the second formula. The first one follows from \Cref{deg.girth>4} applied to $F$.
\end{proof}

	\begin{corollary}
		\label{deg_u.explicito}
		Let $U \in \mathcal{U}(d)$ be a unicyclic graph of order $n$ with cycle $C$ and forest $F$. If $c = |V(C)|$, then $\deg_u(U) =$
		\begin{equation*}
			\begin{cases}
				2\dpe(d) + \dpe(F) - \deg(F) - p_4(U)+ \zeta_C(U), & \text{if } c = 3 \\
				2\dpe(d) + \dpe(F) - \deg(F) - p_4(U) + \zeta_C(U)+ 2, & \text{if } c = 4 \\
				2\dpe(d) + \dpe(F) - \deg(F) - p_4(U) + \zeta_C(U)- \frac{c}{2}(c - 5), & \text{if } c \geq 5 \\
			\end{cases}
		\end{equation*}
		where
		\begin{equation*}
			p_4(U) =
			\begin{cases}
				\zeta_2(U) - |d|^2 + n - 3, & \text{if } c = 3 \\
				\zeta_2(U) - |d|^2 + n, & \text{if } c \geq 4 \\
			\end{cases}
		\end{equation*}
	\end{corollary}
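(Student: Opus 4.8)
The plan is to obtain both displayed identities by specializing the general machinery of \Cref{deg.general.fórmula} and \Cref{relacion.deg.zeta2} to the unicyclic case. The key structural facts are that a unicyclic graph $U$ of order $n$ has exactly $n$ edges and contains no $K_4$, so $k_4(U)=0$; moreover $k_3(U)=1$ if $c=3$ and $0$ otherwise, while $c_4(U)=1$ if $c=4$ and $0$ otherwise. I will also repeatedly use that $2\,dpe(s)=n^2+n-|s|^2$, which is \Cref{dpe.fórmula} applied with $\lVert U\rVert=n$.

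For the expression for $\deg_u(U)$, I would start from the identity
\[ \deg_u(U)=\deg(U)-\deg(C)-\deg(F)+dpe(C)+dpe(F) \]
of the preceding theorem and substitute the closed forms of its cycle terms. By \Cref{deg.general.fórmula} with $k_4(U)=0$ we have $\deg(U)=2\,dpe(s)-p_4(U)$ when $c\neq 4$ and $\deg(U)=2\,dpe(s)+2-p_4(U)$ when $c=4$. The remaining cycle contributions are explicit: $\deg(C_3)=0$, $\deg(C_4)=2$, and $\deg(C_c)=c(c-4)$ for $c\geq 5$ by \eqref{ciclo.deg}, while $dpe(C_c)=\binom{c}{2}-c$ by \Cref{dpe.fórmula}, giving $dpe(C_3)=0$, $dpe(C_4)=2$, and $dpe(C_c)=\frac{c(c-3)}{2}$ for $c\geq 5$. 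Plugging these in, the cases $c=3$ and $c=4$ simplify at once, and for $c\geq 5$ the residual cycle terms combine as $-c(c-4)+\frac{c(c-3)}{2}=-\frac{c}{2}(c-5)$, yielding the three stated formulas.

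For the value of $p_4(U)$, I would compare the two available expressions for $\deg(U)$. \Cref{deg.general.fórmula} gives $\deg(U)=2\,dpe(s)+2c_4(U)-p_4(U)$, and \Cref{relacion.deg.zeta2} gives $\deg(U)+\zeta_2(U)=n^2+2c_4(U)+3k_3(U)$, using $\lVert U\rVert=n$ and $k_4(U)=0$. Equating these and cancelling the common $2c_4(U)$ leaves $p_4(U)=2\,dpe(s)+\zeta_2(U)-n^2-3k_3(U)$; substituting $2\,dpe(s)=n^2+n-|s|^2$ collapses this to $p_4(U)=\zeta_2(U)-|s|^2+n-3k_3(U)$, and the two cases follow from the value of $k_3(U)$. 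The whole argument is bookkeeping: no step is genuinely hard, and the only place demanding care is keeping the cycle-length-dependent quantities $c_4(U)$, $k_3(U)$, $\deg(C)$, and $dpe(C)$ consistent across the cases, together with the sign in the $c\geq 5$ simplification.
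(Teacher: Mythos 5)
Your proof is correct and matches the intended argument: the paper states this corollary without a written proof, leaving it as the routine specialization of the preceding theorem via \Cref{deg.general.fórmula}, \Cref{dpe.fórmula}, \eqref{ciclo.deg}, and \Cref{number.of.P4.in.G} (equivalently \Cref{relacion.deg.zeta2}), which is precisely the bookkeeping you carry out. All your case values check out ($k_4(U)=0$, $k_3(U)=1$ iff $c=3$, $c_4(U)=1$ iff $c=4$, $\deg(C_3)=dpe(C_3)=0$, $\deg(C_4)=dpe(C_4)=2$), as does the $c\geq 5$ simplification $-c(c-4)+\tfrac{c(c-3)}{2}=-\tfrac{c}{2}(c-5)$.
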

	
	\begin{proof}
		The formulas for $\deg_u(U)$ are obtained by substituting the expressions for $\deg(U)$ from \Cref{deg.formula}, together with $\dpe(C) = \binom{c}{2} - c$, $\deg(C_3) = 0$, $\deg(C_4) = 2$, and $\deg(C_n) = n(n-4)$ for $n \geq 5$ (see \Cref{ciclo.deg}), into the second formula of \Cref{deg_u.unicyclic}. The formulas for $p_4(U)$ follow from \Cref{relacion.deg.zeta2} combined with $\deg(U) = 2\dpe(d) + 2c_4(U) - p_4(U)$ from \Cref{deg.formula}, noting that $|E(U)| = n$, $c_4(U) = 1$ if $c = 4$ and $0$ otherwise, and $k_3(U) = 1$ if $c = 3$ and $0$ otherwise.
	\end{proof}
	
	Contrary to $\mathcal{T}(d)$, $\mathcal{U}(d)$ is not regular in general. To see this, consider the unicyclic graph $U$ obtained by identifying a vertex of a triangle with a leaf of a $P_4$, and the unicyclic graph $U'$ obtained by identifying a vertex of a $C_4$ with a leaf of a $P_3$. We see that $d(U) = 3^1 2^4 1^1 = d(U')$. However, $\deg_u(U) = 11$ and $\deg_u(U') = 10$.

\section*{Acknowledgements}
This work was partially supported by Universidad Nacional de San Luis, grants PROICO 03-0723 and PROIPRO 03-2923, MATH AmSud, grant 22-MATH-02, Consejo Nacional de Investigaciones
Cient\'ificas y T\'ecnicas grant PIP 11220220100068CO and Agencia I+D+I grants PICT 2020-00549 and PICT 2020-04064. 

	
\end{document}